\newcommand{\R}{\mathbb{R}}
\newcommand{\E}{\mathbb{E}}
\newcommand{\N}{\mathbb{N}}
\renewcommand{\P}{\mathbb P}
\renewcommand{\S}{\mathbb S}
\newcommand{\al}{\alpha}
\newcommand{\be}{\beta}
\newcommand{\la}{\lambda}
\newcommand{\ga}{\gamma}
\newcommand{\ka}{\kappa}
\newcommand{\si}{\sigma}
\newcommand{\ep}{\varepsilon}
\newcommand{\de}{\delta}
\newcommand{\te}{\theta}
\newcommand{\De}{\Delta}
\newcommand{\om}{\omega}
\newcommand{\Om}{\Omega}
\newcommand{\ze}{\zeta}
\newcommand{\f}{\mathcal F}
\newcommand{\proba}{(\Omega ,\mathcal{F},(\f_t)_{t\geq0},\P)}
\newcommand{\toop}{\stackrel{\P}{\longrightarrow}}
\newcommand{\schw}{\stackrel{\raisebox{-1pt}{\textup{\tiny d}}}{\longrightarrow}}
\newcommand{\eqschw}{\stackrel{d}{=}}
\newcommand{\stab}{\stackrel{\mathcal{L}-s}{\longrightarrow}}
\newcommand{\toLt}{\stackrel{L^2}{\longrightarrow}}
\newcommand{\ucp}{\stackrel{\mbox{\tiny u.c.p.}}{\longrightarrow}}
\newcommand{\lan}{\langle}
\newcommand{\ran}{\rangle}
\newcommand{\lf}{\lfloor}
\newcommand{\rf}{\rfloor}
\newcommand{\bee}{\begin{equation}}
\newcommand{\eee}{\end{equation}}
\newcommand{\beea}{\begin{array}}
\newcommand{\eeea}{\end{array}}
\renewcommand{\theequation}{\arabic{section}.\arabic{equation}}
\theoremstyle{plain}
\newtheorem{prop}{Proposition}[section]
\newtheorem{cor}[prop]{Corollary}
\newtheorem{theo}[prop]{Theorem}
\newtheorem{lem}[prop]{Lemma}
\theoremstyle{definition}
\newtheorem{rem}[prop]{Remark}
\newcommand{\cadlag}{c\`adl\`ag}
\definecolor{darkblue}{rgb}{.1, 0.1,.8}
\definecolor{darkgreen}{rgb}{0,0.8,0.2}
\definecolor{darkred}{rgb}{.8, .1,.1}
\DeclareMathOperator{\im}{i}
\newcommand{\1}{\mathds{1}}
\begin{document}

\title{On estimation of quadratic variation for multivariate pure jump semimartingales \thanks{
The authors would like to thank Volodymyr Fomichov for helpful remarks.  
Mark Podolskij
gratefully acknowledges financial support of ERC Consolidator Grant 815703
``STAMFORD: Statistical Methods for High Dimensional Diffusions''. Johannes Heiny was supported by the project ``Ambit fields: Probabilistic properties and statistical inference'' funded by Villum Fonden and by the Deutsche Forschungsgemeinschaft (DFG) through RTG 2131 High-dimensional Phenomena in Probability – Fluctuations and Discontinuity.}} 
\author{Johannes Heiny \thanks{Department
of Mathematics, Ruhr University Bochum, 
E-mail: johannes.heiny@rub.de.} \and
Mark Podolskij\thanks{Department
of Mathematics and Department of Finance, University of Luxembourg,
E-mail: mark.podolskij@uni.lu.}}

\maketitle

\begin{abstract}
\noindent In this paper we present the asymptotic analysis of the realised quadratic variation for multivariate symmetric  $\be$-stable L\'evy processes,
$\be \in (0,2)$,  
and certain pure jump semimartingales. The main focus is on derivation of functional limit theorems for the  realised quadratic variation and its spectrum.
We will show that the limiting process is a matrix-valued $\be$-stable L\'evy process when the original process is symmetric  $\be$-stable, while
the limit is conditionally $\be$-stable in case of integrals with respect to   locally  $\be$-stable motions. These asymptotic results are mostly related
to the work \cite{DJT13}, which investigates the univariate version of the problem. Furthermore, we will show the implications for estimation of eigenvalues and eigenvectors of the quadratic variation matrix, which is a useful result for the principle component analysis. Finally, we propose a consistent subsampling procedure in the L\'evy setting to obtain confidence regions.

\ \

{\it Key words}: \
high frequency data, L\'evy processes, limit theorems, quadratic variation, semimartingales.
\bigskip

{\it AMS 2010 subject classifications:} 60F05, 60F17, 62M15, 62H25.

\end{abstract}

\section{Introduction} \label{sec1}
\setcounter{equation}{0}
\renewcommand{\theequation}{\thesection.\arabic{equation}}

During the past two decades statistics and limit theorems for It\^o semimartingales have received a great deal of attention in the literature. This has been mainly motivated by numerous applications in finance among other fields of science. A tremendous progress has been achieved in the statistical and probabilistic analysis of fine structure of  It\^o semimartingales in the infill asymptotic regime. A detailed account of the asymptotic theory for high frequency observations of semimartingales can be found in the monograph  \cite{JP12}.  

Statistical estimation of quadratic variation of It\^o semimartingales is probably one of the most important statistical problems in financial applications. Indeed, quadratic variation determines the variability of price processes in finance and it plays a crucial role in option pricing, predictions and other applications. During the past twenty years there has been a number of studies devoted to statistical inference for quadratic variation in various settings. Some accounts on this topic can be found in \cite{BGJPS06, BS04,J08,M01} among many other contributions. 
For a $d$-dimensional It\^o semimartingale $(Y_t)_{t\geq 0}$ defined on a filtered probability space $\proba$ and observed at time points $i\De_n$, $i=0,\ldots, \lf t/\De_n \rf$, the classical estimator of the quadratic 
variation $[Y]_t$ is given by the ``sum of squares'' 
\[
[Y]_t^n:= \sum_{i=1}^{\lf t/\De_n \rf} \left( Y_{i\De_n} - Y_{(i-1)\De_n} \right) \left( Y_{i\De_n} - Y_{(i-1)\De_n} \right)^{\top} \toop   [Y]_t,
\]
which we call the \textit{realised quadratic variation}. In the continuous It\^o semimartingale framework, where $(Y_t)_{t\geq 0}$ 
has the form
\[
Y_t=Y_0 + \int_0^t b_s ds + \int_0^t \si_s dW_s
\]
with $b$ being a $d$-dimensional drift, $\si$ a $\R^{d\times d}$-valued volatility process and $W$ a $d$-dimensional Brownian motion, 
estimation of quadratic variation has been investigated in e.g. \cite{BGJPS06, BS04}. In particular, under minimal assumptions on $b$ and $\si$,
they proved the following functional stable weak limit theorem. We refer to the next section for the formal definition of stable convergence.

\begin{theo} \label{Th1} (\cite[Theorem 5.4.2]{JP12})
Let $b$ be a predictable locally bounded drift and $\si$ a \cadlag~volatility process. Then we obtain the functional stable convergence
\[
\De_n^{-1/2} \left([Y]_t^n -  [Y]_t \right) \stab M_t,
\]
where, conditionally on $\f$, $(M_t)_{t\geq 0}$ is a Gaussian martingale with mean zero and conditional covariance function
$$\E[M_t^{jk}M_t^{j'k'}|\f ]= c^{jj'}_tc^{kk'}_t + c^{jk'}_tc^{kj'}_t, \qquad  c_t: = \si_t \si_t^{\top}. $$ 
\end{theo}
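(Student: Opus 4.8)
The plan is to follow the classical route for stable central limit theorems for quadratic functionals of It\^o semimartingales. First I would invoke a standard localization argument (as in \cite[Section 4.4.1]{JP12}) to reduce to the case where $b$ and $\si$ are uniformly bounded; since the claim concerns convergence in probability of a normalized sum, this reduction is harmless. Writing $\dd Y = Y_{i\De_n}-Y_{(i-1)\De_n}$ and splitting it into its drift part $\int_{(i-1)\De_n}^{i\De_n} b_s\,ds = O(\De_n)$ and its martingale part, the drift contributions to the products $\dd Y(\dd Y)^\top$ are of order $\De_n^{3/2}$ per increment once paired with a martingale increment, and $O(\De_n^2)$ for the pure drift square; summing $\approx t/\De_n$ terms and multiplying by $\De_n^{-1/2}$ shows these are u.c.p.\ negligible. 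Hence I may replace $\dd Y$ by $\int_{(i-1)\De_n}^{i\De_n}\si_s\,dW_s$.

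Next I would freeze the volatility at the left endpoint of each block, replacing $\int_{(i-1)\De_n}^{i\De_n}\si_s\,dW_s$ by $\si_{(i-1)\De_n}\dd W$, with $\dd W = W_{i\De_n}-W_{(i-1)\De_n}$. The \cadlag~property of $\si$ together with the Burkholder--Davis--Gundy inequality gives the $L^2$ control on the resulting error, which again vanishes after $\De_n^{-1/2}$-scaling and summation. After these two steps the problem reduces to proving
$$
\De_n^{-1/2}\Bigl(\sum_{i=1}^{\lf t/\De_n\rf}\si_{(i-1)\De_n}\dd W(\dd W)^\top \si_{(i-1)\De_n}^\top - \int_0^t c_s\,ds\Bigr)\stab M_t.
$$
Because $\E[\dd W(\dd W)^\top\mid \f_{(i-1)\De_n}]=\De_n I_d$, the conditional expectation of each summand is $\De_n c_{(i-1)\De_n}$, a Riemann sum converging to $\int_0^t c_s\,ds$. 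Thus I centre the summands and set $\zeta_i^n := \De_n^{-1/2}(\si_{(i-1)\De_n}\dd W(\dd W)^\top\si_{(i-1)\De_n}^\top - \De_n c_{(i-1)\De_n})$, a martingale-difference array with respect to $(\f_{i\De_n})_i$.

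The core of the argument is then the general stable limit theorem for such arrays (\cite[Theorem 2.2.15]{JP12}). Four conditions must be checked. The Lindeberg condition is most easily replaced by a Lyapunov bound: since $\dd W$ is Gaussian and $\si$ is bounded, $\sum_i\E[\|\zeta_i^n\|^4\mid\f_{(i-1)\De_n}]=O(\De_n)\to 0$. The conditional second-moment condition requires $\sum_i\E[\zeta_i^{n,jk}\zeta_i^{n,j'k'}\mid\f_{(i-1)\De_n}]\toop \int_0^t(c_s^{jj'}c_s^{kk'}+c_s^{jk'}c_s^{kj'})\,ds$, which follows from the Gaussian (Isserlis/Wick) identity $\E[W^aW^bW^cW^d]=\delta^{ab}\delta^{cd}+\delta^{ac}\delta^{bd}+\delta^{ad}\delta^{bc}$ applied to the centred products, producing exactly the two cross terms after another Riemann-sum passage. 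Finally the two orthogonality conditions, $\sum_i\E[\zeta_i^n\,\dd W^{(l)}\mid\f_{(i-1)\De_n}]\toop 0$ and $\sum_i\E[\zeta_i^n\,\dd N\mid\f_{(i-1)\De_n}]\toop 0$ for every bounded martingale $N$ orthogonal to $W$, secure the conditional Gaussianity and martingale structure of the limit: the first vanishes because third moments of centred Gaussians are zero, and the second because each $\zeta_i^n$ is a measurable function of $\dd W$ alone, hence conditionally orthogonal to increments of any $N\perp W$.

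The main obstacle will be the stable, functional part rather than the moment computations. The moment estimates are routine once the Gaussian structure is exploited, but verifying the orthogonality against all bounded martingales orthogonal to $W$---which is what upgrades ordinary convergence to stable convergence and pins down the joint law of $(M,W)$ on the extended space---relies on the martingale-representation machinery underlying \cite[Theorem 2.2.15]{JP12}; moreover the functional (Skorokhod-space) statement additionally needs C-tightness, which here follows from the Lyapunov-type bounds applied to increments over $[s,t]$ together with the continuity of $t\mapsto\int_0^t c_s\,ds$. Assembling these ingredients yields the stated functional stable convergence, with the limit $M$ being, conditionally on $\f$, a centred Gaussian martingale with the announced covariance.
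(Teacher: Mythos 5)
Your argument breaks at two places, and both are genuine gaps rather than omitted routine details. (Note that the paper itself does not prove this statement; it cites \cite[Theorem 5.4.2]{JP12}, so the comparison below is with that proof.)

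First, the drift. Your own bookkeeping does not show negligibility: a cross term of size $O(\De_n^{3/2})$ per increment, summed over $\lf t/\De_n\rf$ increments and multiplied by $\De_n^{-1/2}$, gives $O(t)$, not $o(1)$. The terms $\int_{(i-1)\De_n}^{i\De_n}b_s\,ds\,\bigl(\int_{(i-1)\De_n}^{i\De_n}\si_s\,dW_s\bigr)^{\top}$ are negligible only because of cancellation: one splits them into martingale differences (conditional variances of order $\De_n^{3}$, hence harmless) plus compensators $\E[\,\cdot\,|\f_{(i-1)\De_n}]$, and the compensator is controlled by how well $b_s$ can be approximated in $L^2$ by an $\f_{(i-1)\De_n}$-measurable proxy. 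Since $b$ is only assumed predictable and locally bounded --- not \cadlag\ --- you cannot use $b_{(i-1)\De_n}$ and pathwise continuity; one must approximate $b$ in $L^2(\Om\times[0,t])$ by averages of its own past (an averaging-operator/density argument). This step is present in \cite{JP12} and cannot be replaced by order-of-magnitude counting.

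Second, and more seriously, your intermediate reduction is false under the stated hypotheses. You claim the problem reduces to $\De_n^{-1/2}\bigl(\sum_{i}\si_{(i-1)\De_n}\dd W(\dd W)^{\top}\si_{(i-1)\De_n}^{\top}-\int_0^t c_s\,ds\bigr)\stab M_t$. Combined with the (correct) stable CLT for this array centred at its own compensator $\De_n c_{(i-1)\De_n}$, that claim would force $\De_n^{-1/2}\bigl(\De_n\sum_i c_{(i-1)\De_n}-\int_0^t c_s\,ds\bigr)\toop 0$, i.e.\ the left-endpoint Riemann sum of $c$ must approximate its integral with error $o(\De_n^{1/2})$. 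A merely \cadlag\ $\si$ does not deliver this: paths of \Holder\ regularity below $1/2$ (the typical regularity when $\si$ is itself an It\^o process, and realizable by an explicit deterministic sawtooth series $c_s=\sum_n 2^{-n/2}\{2^n s\}$) make this error of exact order $\De_n^{1/2}$ with a non-vanishing constant, so your reduced statement would converge to $M_t$ plus a bias. Equivalently, the coefficient-freezing error $\De_n^{-1/2}\sum_i\bigl\{(\int_{(i-1)\De_n}^{i\De_n}\si_s dW_s)^{\otimes2}-(\si_{(i-1)\De_n}\dd W)^{\otimes2}\bigr\}$ is not u.c.p.\ negligible: its compensator is $\De_n^{-1/2}\sum_i\bigl\{\E[\int_{(i-1)\De_n}^{i\De_n}c_s\,ds\,|\,\f_{(i-1)\De_n}]-\De_n c_{(i-1)\De_n}\bigr\}$, which is exactly this Riemann-type error. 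In your scheme the freezing error and the recentring error are each $O_{\P}(1)$ and happen to cancel; treating them as separately negligible is the gap. The proof in \cite{JP12} avoids point values of $c$ altogether: by It\^o's formula, $[Y]^n_t-[Y]_t$ equals the martingale-difference array $\sum_i\int_{(i-1)\De_n}^{i\De_n}(Y_s-Y_{(i-1)\De_n})\odot dY_s$ up to an $O(\De_n)$ edge term, the stable CLT for triangular arrays is applied to this array directly, and its conditional covariances involve block averages $\De_n^{-1}\int_{(i-1)\De_n}^{i\De_n}c_s\,ds$, whose convergence holds for any locally bounded measurable $c$ --- no approximation rate of $c$ by sampled values is ever required. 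Your verification of the Lyapunov, Wick-covariance and orthogonality conditions for the frozen array is fine, but it is applied to the wrong array.
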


\noindent
The first result on this problem for general It\^o semimartingales with non-vanishing continuous martingale part appeared in \cite{J08}. 
The author investigated processes of the type
 \[
Y_t=Y_0 + \int_0^t b_s ds + \int_0^t \si_s dW_s +J_t,
\]
where $J$ denotes the jump part of $Y$. He proved the following result.

\begin{theo} (\cite[Theorem 5.4.2]{JP12})
Assume that $b$ is predictable and locally bounded drift, $\si$ is  \cadlag~and $J$ is a pure jump It\^o semimartingale with a locally bounded characteristic. Then we obtain the functional stable convergence
\[
\De_n^{-1/2} \left([Y]_t^n -  [Y]_{\De_n \lf t/\De_n \rf} \right) \stab M_t + \sum_{m:~T_m \leq t} \left( \De Y_{T_m} R_m^{\top} +
 R_m \De Y_{T_m}^{\top}  \right),
\]
where $(M_t)_{t \geq 0}$ has been introduced in Theorem \ref{Th1}, $(T_m)_{m\geq 1}$ (resp. $\De Y_{T_m}$) denote the jump times (resp. jump sizes) of $Y$, $R_m=\sqrt{\ka_m} \si_{T_m-} \Psi_m^+
+ \sqrt{1-\ka_m} \si_{T_m} \Psi_m^-$ and 
\[
(\ka_m)_{m\geq 1} \text{ i.i.d. } \sim \mathcal{U}(0,1), \qquad  (\Psi_m^+)_{m\geq 1} ,  (\Psi_m^-)_{m\geq 1}  \text{ i.i.d. } \sim \mathcal N_d (0, I_d). 
\]
Furthermore, conditionally on $\f$, the processes $(M_t)_{t \geq 0}$, $(\ka_m)_{m\geq 1}$, $(\Psi_m^+)_{m\geq 1}$ and $(\Psi_m^-)_{m\geq 1}$
are mutually independent. 
\end{theo}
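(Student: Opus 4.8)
The plan is to follow the classical localisation-and-decomposition strategy for central limit theorems of power variations. Writing the continuous part as $Y^c_t = Y_0 + \int_0^t b_s\,ds + \int_0^t \si_s\,dW_s$, one has $Y = Y^c + J$ and $[Y]=[Y^c]+[J]$, since the cross bracket $[Y^c,J]$ vanishes because $Y^c$ is continuous and $J$ is pure jump. By the usual localisation procedure we may assume that $b$, $\si$ and the characteristic of $J$ are bounded. Expanding the outer product of $\dd Y = \dd Y^c + \dd J$ and summing gives
\[
[Y]^n_t - [Y]_{\De_n\lf t/\De_n\rf} = A^n_t + B^n_t + C^n_t,
\]
where $A^n_t = \sum_i (\dd Y^c)(\dd Y^c)^\top - \int_0^{\De_n\lf t/\De_n\rf} c_s\,ds$, where $B^n_t = \sum_i (\dd J)(\dd J)^\top - \sum_{s\le \De_n\lf t/\De_n\rf}(\De Y_s)(\De Y_s)^\top$, and where $C^n_t = \sum_i\big[(\dd Y^c)(\dd J)^\top + (\dd J)(\dd Y^c)^\top\big]$ collects the cross terms.

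First I would treat $A^n$ and $B^n$. The term $A^n$ is exactly the centred realised quadratic variation of the \emph{continuous} It\^o semimartingale $Y^c$, so Theorem \ref{Th1} yields $\De_n^{-1/2}A^n_t \stab M_t$. For $B^n$ one shows $\De_n^{-1/2}B^n_t \toop 0$: the large jumps (those exceeding a threshold $\ep$) are finitely many and eventually isolated in distinct intervals, so each contributes $(\De Y_{T_m})(\De Y_{T_m})^\top$ up to an error governed by the small jumps accumulated in the same interval, which is $o_P(\De_n^{1/2})$, while the purely small-jump intervals contribute a fluctuation that is negligible at rate $\De_n^{1/2}$ under the assumed jump integrability. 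Letting $\ep\to 0$ after $n\to\infty$ requires a uniform bound on the small-jump contribution.

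The heart of the argument is $C^n$. Fixing a truncation level $\ep$, I would keep only the finitely many large jumps $T_1,T_2,\dots$ with $|\De Y_{T_m}|>\ep$; a uniform estimate shows the cross terms produced by the small jumps are $o_P(\De_n^{1/2})$ and can be recovered in the limit $\ep\to 0$. For a large jump $T_m$ lying in the interval indexed by $i_m$, the only surviving contribution is $\De_n^{-1/2}\big[(\De Y_{T_m})(\Delta^n_{i_m} Y^c)^\top + (\Delta^n_{i_m}Y^c)(\De Y_{T_m})^\top\big]$, the drift part of $\Delta^n_{i_m}Y^c$ being $O_P(\De_n)$ and hence negligible. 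Splitting the stochastic integral $\int\si_s\,dW_s$ over this interval at the time $T_m$ and using left/right continuity of $\si$, one obtains $\De_n^{-1/2}\,\Delta^n_{i_m} Y^c \approx \sqrt{\ka_m^n}\,\si_{T_m-}\,\Psi^{+,n}_m + \sqrt{1-\ka_m^n}\,\si_{T_m}\,\Psi^{-,n}_m$, where $\ka_m^n=(T_m-(i_m-1)\De_n)/\De_n$ is the relative position of the jump in its interval and $\Psi^{\pm,n}_m$ are the normalised Brownian increments before and after $T_m$.

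It then remains to establish joint stable convergence. The classical facts of \cite{JP12} are that the relative positions $\ka_m^n$ converge to i.i.d.\ $\mathcal U(0,1)$ variables $\ka_m$, that the normalised increments $\Psi^{\pm,n}_m$ converge to i.i.d.\ $\mathcal N_d(0,I_d)$ vectors $\Psi^\pm_m$, and that these limits are mutually independent and independent of $\f$, because Brownian increments over disjoint shrinking intervals are independent and the jump positions become asymptotically uniform and independent of the marks. This gives $\De_n^{-1/2}\Delta^n_{i_m} Y^c \to R_m$ and hence the stated limit $\sum_{m:\,T_m\le t}\big((\De Y_{T_m})R_m^\top + R_m(\De Y_{T_m})^\top\big)$ for $C^n$. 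Finally, $M_t$ arises from the continuous increments spread across all intervals, whereas each $R_m$ is built from a single shrinking interval, so conditionally on $\f$ the limit of $A^n$ is independent of that of $C^n$; this is made rigorous by conditioning on $\f$ together with the jump times and marks and passing to the limit in the conditional characteristic function. The main obstacle is exactly this last step—the joint stable convergence with the correct conditional-independence structure together with the uniform-in-$\ep$ control needed to remove the truncation—rather than the individual convergences, which are comparatively routine.
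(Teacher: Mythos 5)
The paper does not prove this statement --- it is quoted from \cite[Theorem 5.4.2]{JP12} (going back to \cite{J08}) --- so the only meaningful comparison is with the proof in those references, and your outline follows it in all essentials: the decomposition into the continuous square term (handled by Theorem \ref{Th1}), the jump-jump term shown negligible at rate $\De_n^{1/2}$, and the jump--Brownian cross terms producing $R_m$ via the relative jump position $\ka_m^n \to \mathcal{U}(0,1)$ and the split pre-/post-jump Brownian increments with \cadlag\ approximation $\si_{T_m-}$, $\si_{T_m}$. You also correctly locate where the real work lies --- the joint stable convergence with the stated conditional-independence structure, and the uniform-in-$\ep$ estimates needed to remove the jump truncation --- which is precisely the machinery developed in \cite{JP12} for this theorem, so your sketch is a faithful reconstruction of the cited proof rather than a genuinely different route.
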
 

\noindent
Quite surprisingly, central limit theorems for quadratic variation only require very weak assumptions on the model in both settings.  Things are different in the pure jump setting, which has been recently studied in  \cite{DJT13}. In the latter framework the authors consider univariate stochastic integral processes, where the driving motion is locally $\be$-stable with $\be \in (0,2)$.  They show a stable weak limit theorem for the realised quadratic variation with a $\be$-stable limit in most interesting cases. Further related literature includes the articles \cite{T15,T19}, which investigate the estimation of the jump activity index and non-parametric inference for the spectral measure in the bivariate setting, respectively.    

The aim of this paper is to provide a weak limit theory for the realised quadratic variation in the setting of multivariate symmetric $\be$-stable L\'evy processes and related stochastic integral models. We will show that the limiting process is a matrix-valued $\be$-stable L\'evy motion in the pure L\'evy case and we will determine its directional measure. In the setting of integral models driven by a multivariate locally $\be$-stable L\'evy process we will prove that the limit is conditionally $\be$-stable. Another important contribution of our paper is the asymptotic analysis of the associated eigenvalues and eigenvectors, which is of major importance for the principal component analysis. This part provides an extension 
of the classical work \cite{A63}, which gave a complete theory for the principal component analysis in the Gaussian i.i.d. setting.

The paper is organised as follows. Section \ref{sec2} presents the models, notations and the main theoretical results. Section \ref{sec3} is devoted to construction of asymptotic confidence regions via a subsampling approach in the L\'evy setting. Finally, the proofs of the main results are collected in  Section \ref{sec4}.

\section{The model, notation and main results} \label{sec2}
\setcounter{equation}{0}
\renewcommand{\theequation}{\thesection.\arabic{equation}}

\subsection{Notation} \label{sec2.1} 
In this subsection we briefly introduce the main notations used throughout the paper. For $a \in \mathbb{C}$ we write $|a|$ to denote the norm of $a$.
For a vector or a matrix $x$ the transpose of $x$ is denoted by $x^{\top}$. The notation $\|x\|$ (resp. $\lan x,y \ran$) stands for the Euclidean norm of $x\in \R^d$ (resp. the scalar product of $x,y\in \R^d$).  
 We associate to the Frobenius norm $\|\cdot\|_{\text{tr}}$ the scalar product
\[
\lan A_1, A_2 \ran_{\text{tr}} := \text{tr} (A_1^{\top} A_2), \qquad    A_1,A_2\in \R^{d_1 \times d_2},
\]
where $\text{tr}$ denotes the trace.  We denote by $\|A\|_{\text{op}}$ the operator norm of $A \in  \R^{d_1 \times d_2}$. 
For a symmetric matrix $A \in \R^{d \times d}$ we write $\la_{\max}(A)$, $\la_{\min}(A)$ for the largest and the smallest eigenvalue of $A$, respectively. Furthermore,  $A_1\otimes A_2$ stands for the Kronecker product of $A_1$ and $A_2^{\top}$, and $A^{\otimes 2}:=A \otimes A$. In particular, for column vectors $x,y \in \R^d$ we have $x\otimes y=x y^{\top}$. We also introduce the symmetric tensor
\[
x\odot y = x y^{\top} + y x^{\top}.
\] 
The set $\S_d$ denotes the Euclidean unit sphere in $\R^d$.   For a \cadlag~stochastic process $(Y_t)_{t \geq 0}$ we denote by $Y_{t-}$ the left limit of $Y$ at point $t$ and by $\De Y_t = Y_t - Y_{t-}$ the jump at $t$. Throughout this paper $\De_n$ is a sequence of positive numbers satisfying $\De_n \to 0$ and we write
\[
\De_i^n Y:= Y_{i\De_n} - Y_{(i-1)\De_n}.
\]
If not stated otherwise, the asymptotic relations are with respect to $n \to \infty$ throughout this paper.
For stochastic processes $Y^n$ and $Y$ we denote by $Y^n \ucp Y$ the uniform convergence in probability, that is 
$$\sup_{t\in [0,T]}|Y^n_t - Y_t| \toop 0 \qquad \text{for any } T>0.$$ 
 In the following we will often use the notion of {\em stable convergence}. We recall that a sequence of random variables 
$(Y_n)_{n\in \N}$ 
defined on $(\Omega, \mathcal F, \mathbb P)$ is said 
to converge stably  with limit $Y$ ($Y_n \stab Y$) defined on an extension
$(\overline \Omega, \overline{\mathcal F}, 
\overline{\mathbb P})$ of the original probability space $(\Omega, \mathcal F, \mathbb P)$, 
iff for any bounded, continuous function $g$ and any bounded $\mathcal{F}$-measurable random variable $Z$ it holds that
\bee \label{defstable}
\E[ g(Y_n) Z] \rightarrow \overline{\E}[ g(Y) Z], \quad \text{as } n \rightarrow \infty.
\eee  
If not mentioned otherwise, the stable convergence is understood in the sense of Skorokhod $J_1$-topology for stochastic processes defined on the interval $[0,T]$.  We refer to \cite{AE78,R63} for a detailed exposition of stable convergence. 

Finally, we will deal with $\R^d$-valued (or $\R^{d\times d}$-valued) L\'evy processes $(Y_t)_{t\geq 0}$ without a Gaussian part. They are characterised 
by the L\'evy triplet $(\ga,0,\nu)$, i.e.
\[
\E[\exp(i\lan u,Y_1\ran)] = \exp\left(i\lan\gamma,u \ran + \int_{\R^d} \left\{ \exp(i \lan x,u \ran) - 1 - i \lan x,u \ran \1_{\{\|x\|\leq 1\}} \right\} \nu(dx)\right)
\] 
for $u,\ga \in \R^d$ and a measure $\nu$ satisfying $\nu(\{0\})=0$, $\int_{\R^d} (1\wedge \|x\|^2) \nu (dx)< \infty$. When we consider  
$\R^{d\times d}$-valued L\'evy processes we use the scalar product $\lan \cdot, \cdot \ran_{\text{tr}}$ instead.

\subsection{The setting} \label{sec2.2} 
We consider a filtered probability space $\proba$, satisfying the usual conditions, on which we define a $d$-dimensional symmetric $\be$-stable L\'evy process  $(L_t)_{t \geq 0}$ with L\'evy triplet $(0,0,G)$. Here $G$ denotes the L\'evy measure of $L$, which admits the representation
\bee \label{repG}
G(dx) = \frac{1}{ \rho^{1+\be}} d\rho\, H(d\te), 
\eee  
where $x=(\rho,\te) \in \R_+ \times \S_d$ and $H$ denotes a symmetric finite measure on $\S_d$ (called the \textit{directional measure}). Our main focus is on the classical realised quadratic variation, which is defined as
\bee \label{qvn}
[Y]_t^n:= \sum_{i=1}^{\lf t/\De_n \rf} (\De_i^n Y)^{\otimes 2},   
\eee
for any semimartingale $(Y_t)_{t\geq 0}$.  When $Y$ is a pure jump semimartingale, which in particular applies to $Y=L$, it holds that
\[
[Y]_t^n \toop [Y]_t:= \sum_{s\in [0,t]} (\De Y_s)^{\otimes 2} \qquad \text{as } \De_n \to 0.   
\] 
The main aim of this paper is to study the weak limit theory associated to the above convergence and its consequences for estimation of the spectrum of $[Y]_t$. We start with the following proposition.

\begin{prop} \label{prop1}
Assume that $\text{\rm span}(\text{\rm supp}(H))=\R^{d}$. Then the eigenvalues $\la_1,\ldots,\la_d$ of $[L]_t$ are all distinct and strictly positive $\P$-almost surely.  
\end{prop}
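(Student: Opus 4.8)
The plan is to write $[L]_t$ through the jumps of $L$ and to reduce both assertions to an elementary linear-algebraic fact about rank-one perturbations. Writing the jumps of $L$ on $[0,t]$ in polar coordinates as $\De L_{T_m}=\rho_m\theta_m$ with $\rho_m>0$, $\theta_m\in\S_d$, we have
\[
[L]_t=\sum_{m}(\De L_{T_m})^{\otimes 2}=\sum_m \rho_m^2\,\theta_m\theta_m^\top .
\]
The jumps form a Poisson point process with intensity $dt\,G$, so by the representation \eqref{repG} the number of jumps on $[0,t]$ whose direction lies in a fixed $U\subseteq\S_d$ is Poisson with mean $t\,H(U)\int_0^\infty\rho^{-1-\be}\,d\rho$, which is infinite whenever $H(U)>0$ (infinite activity). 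First I would use $\mathrm{span}(\mathrm{supp}(H))=\R^d$ to pick linearly independent $\theta^{(1)},\dots,\theta^{(d)}\in\mathrm{supp}(H)$ and open neighbourhoods $U_i\ni\theta^{(i)}$ so small that any selection $\psi_i\in U_i$ stays linearly independent (an open condition), with $H(U_i)>0$. Then almost surely each $U_i$ contains the direction of some jump; calling these directions $\theta_{m_1},\dots,\theta_{m_d}$ (linearly independent, with $\rho_{m_i}>0$) gives
\[
[L]_t\ \succeq\ \sum_{i=1}^d \rho_{m_i}^2\,\theta_{m_i}\theta_{m_i}^\top\ \succ\ 0,
\]
since the right-hand side is positive definite on the basis $\{\theta_{m_i}\}$. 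This already yields strict positivity of all eigenvalues $\P$-almost surely.

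For distinctness I would show $\P(\mathrm{disc}([L]_t)=0)=0$, where $\mathrm{disc}(M)=\prod_{i<j}(\la_i(M)-\la_j(M))^2$ is a polynomial in the entries of the symmetric matrix $M$. The idea is a conditioning argument that frees up $d$ of the jump magnitudes. Fix $\ep>0$ and consider the finitely many jumps with $\rho_m>\ep$; conditionally on their number, on all their directions, on the contribution $A_0$ of the jumps with $\rho_m\le\ep$, and on all but $d$ of the large squared magnitudes, the remaining $d$ squared magnitudes $s_1,\dots,s_d$ are independent with Lebesgue densities (inherited from $\rho^{-1-\be}\,d\rho$), and
\[
[L]_t=A+\sum_{j=1}^d s_j\,\theta_{k_j}\theta_{k_j}^\top,
\]
where $A$ is fixed under the conditioning and $\theta_{k_1},\dots,\theta_{k_d}$ is a linearly independent family extracted from the conditioned directions (available on an event $F_\ep$ of probability tending to $1$ as $\ep\to0$, by the cone argument above). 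Then $P(s):=\mathrm{disc}\big(A+\sum_j s_j\theta_{k_j}\theta_{k_j}^\top\big)$ is a polynomial in $s\in\R^d$; if $P\not\equiv0$ its zero set is Lebesgue-null, so the conditional probability of $\{\mathrm{disc}([L]_t)=0\}$ vanishes. De-conditioning and letting $\ep\to0$ then finishes the proof.

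The crux, and the step I expect to be the main obstacle, is to prove that $P\not\equiv0$, i.e. that the affine family $A+\sum_j s_j\theta_{k_j}\theta_{k_j}^\top$ is not contained in the discriminant variety. Writing $T=[\theta_{k_1}|\dots|\theta_{k_d}]$ (invertible) and $s_j=c\,t_j$, this reduces to the purely linear-algebraic claim that \emph{for every invertible $T\in\R^{d\times d}$ there is a diagonal $D$ such that $TDT^\top$ has $d$ distinct eigenvalues}. Note that the naive route of arranging the $\theta_{k_j}\theta_{k_j}^\top$ to span all of $\mathrm{Sym}(d)$ is unavailable, since $\mathrm{span}(\mathrm{supp}(H))=\R^d$ does not force $\{\theta\theta^\top\}$ to span $\mathrm{Sym}(d)$ (e.g. $\mathrm{supp}(H)=\{e_1,\dots,e_d\}$), so the claim must be handled directly.

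I would prove the claim by induction on $d$. Let $\Pi$ be the orthogonal projection onto $v_1^\perp$, where $v_1=Te_1$; the vectors $w_i=\Pi v_i$, $i\ge2$, form a basis of $v_1^\perp\cong\R^{d-1}$ (if $\sum_{i\ge2}c_iw_i=0$ then $\sum_{i\ge2}c_iv_i\in\mathrm{span}(v_1)$, forcing all $c_i=0$ by independence of the $v_i$). By the inductive hypothesis one can choose $t_2,\dots,t_d$ so that the compression $\sum_{i\ge2}t_i w_iw_i^\top$ has $d-1$ distinct eigenvalues. Since, as $t_1\to\infty$, the matrix $\sum_i t_i v_iv_i^\top=TDT^\top$ has one eigenvalue tending to $+\infty$ while the remaining $d-1$ converge to those distinct eigenvalues of the compression, it has simple spectrum for $t_1$ large. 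Granting the claim, $A+c\,T\,\mathrm{diag}(t)\,T^\top$ has distinct eigenvalues for $c$ large, because by Weyl's inequalities its spectral gaps grow linearly in $c$ while $A$ is a bounded perturbation; hence $P(ct)\ne0$ and so $P\not\equiv0$, which is exactly what the conditioning argument needs.
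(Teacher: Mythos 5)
Your proof is correct, but it follows a genuinely different route from the paper's. The paper works at the level of the \emph{law} of $[L]_t$: using $\mathrm{supp}(\nu_1*\nu_2)=\mathrm{supp}(\nu_1)+\mathrm{supp}(\nu_2)$ it identifies the support of the law of $[L]_t$ as a $q$-dimensional convex cone generated by matrices $x_j^{\otimes 2}$ with $x_j\in\mathrm{supp}(H)$, invokes \cite[Theorem 27.10]{S99} to obtain a Lebesgue density on that cone, and then argues that the two exceptional sets (singular matrices, matrices with a repeated eigenvalue) are closed with empty interior in the cone, hence null. You instead argue directly on the Poisson point process of jumps: positivity by extracting jumps whose directions are linearly independent (possible a.s.\ since $H$ charges neighbourhoods of $d$ independent directions and the radial intensity is infinite), and distinctness by conditioning on everything except $d$ large-jump magnitudes, which then have a joint Lebesgue density, so that it suffices to show the discriminant is not identically zero as a polynomial in these $d$ variables; your induction showing that $T\,\mathrm{diag}(t)\,T^{\top}$ can be given simple spectrum, combined with the Weyl-inequality step to absorb the fixed matrix $A$, settles exactly this point. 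The comparison is instructive: the paper's argument is shorter but leans on a nontrivial absolute-continuity theorem for infinitely divisible laws, and its claim that the repeated-eigenvalue set $S_2$ has empty interior is dispatched with ``the proof works similarly'' --- the content hidden in that step (a generic rank-one/diagonal perturbation splits eigenvalues) is essentially your linear-algebraic claim, which you prove in full; moreover, passing from ``closed with empty interior'' to ``Lebesgue-null'' implicitly uses that these sets are zero sets of polynomials, a point your conditioning argument handles explicitly. Your proof is longer but self-contained (no appeal to Sato's theorem), and it isolates the one genuinely nontrivial algebraic fact the statement rests on.
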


\begin{proof}
The assumption $\text{\rm span}(\text{\rm supp}(H))=\R^{d}$ obviously implies that $\text{\rm supp}(H)$ contains linearly independent vectors 
$x_1,\ldots,x_d \in \R^d$. After 
 a change of basis we can assume without loss of generality that $x_j=e_j$, $j=1,\ldots, d$, where $(e_j)_{1\leq j \leq d}$ denotes the standard basis of $\R^d$.

We recall that for two probability measures $\nu_1, \nu_2$ it holds that $\text{\rm supp}(\nu_1*\nu_2)= 
\text{\rm supp}(\nu_1)+ \text{\rm supp}(\nu_2)$, where $\nu_1*\nu_2$ denotes the convolution of $\nu_1$ and $\nu_2$. From this observation 
we deduce that there exist further vectors $x_{d+1},\ldots, x_q \in \R^d$ such that  
 the law $\nu^t$ of $[L]_t$ satisfies 
\[
\text{\rm supp} \left(\nu^t \right) = \left\{\sum_{j=1}^{q} a_j x_j^{\otimes 2}:~ a_j\geq 0    \right\}.
\]
Here $q$ is a number satisfying $d\leq q \leq d(d+1)/2$ and symmetric matrices $(x_j^{\otimes 2})_{1\leq j\leq q}$ are linearly independent. 
In other words, $\text{\rm supp} (\nu^t )$ is a convex cone of dimension $q$. According to e.g. \cite[Theorem 27.10]{S99}, $\nu^t$ has a Lebesgue 
density on  $\text{\rm supp} (\nu^t )$. For a symmetric matrix $A\in \R^{d\times d}$ 
let us denote by $\la_1(A),\ldots, \la_d(A)$ the real eigenvalues of $A$. 
We observe that the sets
\begin{align}
S_1&= \left\{A=\sum_{j=1}^{q} a_j x_j^{\otimes 2}:~  a_j\geq 0,~ \text{det}(A)=0   \right\},  \\[1.5 ex]
S_2 &=  \left\{A=\sum_{j=1}^{q} a_j x_j^{\otimes 2}:~  a_j\geq 0,~ \la_{j_1}(A)=\la_{j_2}(A) \text{ for some } j_1 \not= j_2   \right\}
\end{align}
are closed with an empty interior in $\text{\rm supp} (\nu^t )$. Indeed, consider the  set $S_1$ and define $f(a_1,\ldots, a_q):= \sum_{j=1}^{q} a_j x_j^{\otimes 2}$. Recalling that $x_j=e_j$, $j=1,\ldots, d$, and assuming that $\text{det}(f(a_1,\ldots, a_q))=0$ for some $a_j\geq 0$, we deduce 
 \[
 \text{det}(f(a_1+\ep,\ldots,a_d+\ep,a_{d+1}, \ldots, a_q)) = \text{det} (f(a_1,\ldots, a_q) + \ep I_d)\not =0 
 \]
 for some small enough $\ep>0$. Hence, the  set $S_1$ has an empty interior and the proof works similarly for the  set $S_2$. Consequently, both sets have Lebesgue measure $0$ on $\text{\rm supp} (\nu^t )$, which implies the statement of the proposition. 
\end{proof}

\noindent
This result will play an important role if we want to transfer the limit theory for $[L]_t$ to the spectrum of $[L]_t$.

\subsection{Limit theorem in the L\'evy case}

In this section we present the asymptotic theory for the realised quadratic variation in the setting of
$\beta$-stable L\'evy processes.
We introduce the sequence 
\bee \label{den}
\de_n=(\De_n \log (1/ \De_n))^{-1/\be}\,, \qquad n \ge 1\,, 
\eee   
which will turn out to be the rate of convergence for the estimator $[L]_t^n$. We obtain the following result.

\begin{theo} \label{th1} 
For any $\be \in (0,2)$ we obtain the functional stable convergence 
\bee \label{Lstab}
U_t^n:= \de_n \left( [L]_t^n - [L]_{\De_n \lf t/\De_n \rf} \right) \stab U_t ,
\eee
where $(U_t)_{t \geq 0}$ is an $\R^{d\times d}$-valued L\'evy process with characteristic triplet $(0,0,\nu_U)$ and the L\'evy measure
 $\nu_U$ is given by  
\bee \label{nuUdef}
 \nu_U(B) = \frac{1}{2\be} \int_{\S_{d\times d}} \mu(dz) \int_{\R_+} \1_B(\rho z) \rho^{-1-\be} d\rho, 
\qquad B \in \mathcal B(\R^d\odot \R^d), 
\eee
and 
$$\mu(z) = \int_{\S_d^2} \1_z\left(\frac{\te_1\odot \te_2}{\|\te_1\odot \te_2\|_{\text{\rm tr}}} \right) \left(2(1+\lan \te_1,\te_2 \ran^2)\right)^{\be/2} H(d\te_1) H(d\te_2), \qquad z\in \mathcal{B}(\S_{d\times d}),$$
where $\S_{d\times d}$ denotes the unit sphere with respect to the Frobenius norm $\|\cdot\|_{\text{\rm tr}}$ and $B$ is bounded away from $0\in \R^{d\times d}$. 
Moreover, the process $U$ is defined on an 
extension $(\overline \Omega, \overline{\mathcal F}, (\overline{\mathcal F}_t)_{t\geq 0},
\overline{\mathbb P})$ of the original space $\proba$ and is independent of the $\si$-algebra $\f$. 
\end{theo}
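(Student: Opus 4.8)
The plan is to exploit the fact that, because $L$ is a pure-jump \Levy~process without drift (symmetry kills the compensator), each increment is exactly the sum of its jumps, $\De_i^n L=\sum_{s\in I_i^n}\De L_s$ with $I_i^n=((i-1)\De_n,i\De_n]$, the sum being understood as the symmetric principal value when $\be\geq1$. Expanding $(\De_i^n L)^{\otimes 2}$ and subtracting the genuine contribution $\sum_{s\in I_i^n}(\De L_s)^{\otimes 2}$ to the quadratic variation yields the exact algebraic identity
\[
U_t^n=\de_n\sum_{i=1}^{\lf t/\De_n\rf}\ \sum_{\substack{s,s'\in I_i^n,\ s<s'}}\De L_s\odot\De L_{s'},
\]
so that $U_t^n$ is $\de_n$ times a grid sum of cross-products of distinct jumps lying in a common interval. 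Writing $Z_i^n$ for the $i$-th inner sum (times $\de_n$), the stationary independent increments of $L$ make $(Z_i^n)_{i\geq1}$ i.i.d. for each fixed $n$; hence $U^n$ is, on the grid, a random walk with i.i.d. steps, with independent increments over disjoint grid blocks.

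With this structure the claim reduces to a limit theorem for row-wise i.i.d. triangular arrays converging to an infinitely divisible law. By the classical functional limit theorem for triangular arrays of independent increments, $U^n$ converges in the Skorokhod $J_1$-topology to a \Levy~process with triplet $(0,0,\nu_U)$ provided (i) $\De_n^{-1}\,\P(Z_1^n\in\cdot)\to\nu_U$ vaguely on $\R^{d\times d}\setminus\{0\}$, (ii) the $\De_n^{-1}$-normalised truncated second moments of $Z_1^n$ vanish as the truncation level tends to $0$ (no Gaussian part), and (iii) the truncated first moments vanish (zero drift). To upgrade ordinary convergence to stable convergence with an $\f$-independent limit I would decompose $L=L(\ep)+\check L(\ep)$ into its jumps larger and smaller than a fixed $\ep$. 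The point is that the pairs of jumps driving the limit sit at the stable scale $\De_n^{1/\be}\to0$, hence eventually inside the small-jump part $\check L(\ep)$, which is independent of the finitely many large jumps generating $L(\ep)$; approximating any bounded $\f$-measurable test variable by a functional of $L(\ep)$ then gives $\E[g(U^n)Z]\to\E[g(U)]\,\E[Z]$, i.e. stable convergence to a limit independent of $\f$.

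The core of the proof is condition (i). For $B$ bounded away from $0$ the event $\{Z_1^n\in B\}$ forces the cross-sum in a single interval to reach the order $1/\de_n$, a rare event dominated by the presence of exactly two jumps $\rho_1\te_1,\rho_2\te_2$ in $I_1^n$ whose cross-product $\rho_1\rho_2\,\te_1\odot\te_2$ lands in $B$. Using $G(dx)=\rho^{-1-\be}d\rho\,H(d\te)$ and the Poisson structure, the expected number of such ordered pairs over one interval is of order $\De_n\,\rho_1^{-1-\be}\rho_2^{-1-\be}\,d\rho_1\,d\rho_2\,H(d\te_1)H(d\te_2)$; passing to the radial product $w=\rho_1\rho_2$ produces a scale integral $\int d\rho_1/\rho_1$ that is logarithmically divergent and is cut off below at the stable scale $\De_n^{1/\be}$. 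This is precisely where the factor $\log(1/\De_n)$ in $\de_n$ originates: the truncated scale integral contributes $\tfrac1\be\log(1/\De_n)$, and since $\de_n^{\be}=(\De_n\log(1/\De_n))^{-1}$ the factors $\De_n$ and $\log(1/\De_n)$ cancel, leaving a finite limit proportional to $\rho^{-1-\be}d\rho$ in the radial product variable. Integrating the directions against $H\otimes H$ with the weight $\|\te_1\odot\te_2\|_{\text{tr}}^{\be}=(2(1+\langle\te_1,\te_2\rangle^2))^{\be/2}$ produces $\mu$, and collecting constants yields exactly $\nu_U$; symmetry of $H$ makes $\nu_U$ symmetric, so (ii)--(iii) hold with zero Gaussian part and zero drift (principal-value centring for $\be\geq1$).

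I expect the main obstacle to be the rigorous control of the single-interval tail in (i), and in particular the separation of the genuine two-jump pairs from the competing contribution of one moderately large jump crossed with the aggregated small-jump increment $\check L_{I_i^n}$. A naive magnitude count places the latter at the same critical order $1/\de_n$; what rescues the clean limit is that, after scaling, these terms form a martingale whose variance carries the factor $\de_n^2\,\E\|\check L_{I_1^n}\|^2\sim(\log(1/\De_n))^{-2/\be}\to0$, so the logarithmic normalisation that keeps the two-jump pairs alive simultaneously suppresses the jump-times-small-part terms. Making this dichotomy precise — choosing the threshold near $\De_n^{1/\be}$, bounding the contributions of three or more jumps per interval, and establishing the vague convergence together with the vanishing truncated moments — is the technically delicate part; the passage from finite-dimensional to functional stable convergence is then routine given the i.i.d. increment structure.
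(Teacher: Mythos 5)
You have the right overall skeleton --- threshold decomposition of the jumps, an i.i.d.\ triangular-array limit theorem verified through tail and truncated-moment conditions, the logarithmic cancellation in the radial integral, variance bounds to kill cross terms, and the logarithmic washout of any fixed-size jumps for the independence part; for $\be\le 1$ this is essentially the paper's proof. But there are two concrete gaps. First, the threshold. With a cutoff at the bare stable scale $\De_n^{1/\be}$, jumps above the cutoff arrive at rate $\De_n\,\overline{G}(\De_n^{1/\be})\asymp H(\S_d)/\be=O(1)$ per interval, so the event ``exactly two large jumps in $I_i^n$'' is \emph{not} rare and does not dominate the tail of $Z_i^n$: the tail analysis must isolate one exceptional pair inside a Poisson cloud of $O(1)$ near-threshold jumps, and the geometric decay in the jump count that both you and the paper implicitly rely on (the paper's Lemma \ref{Poissonest}, where $\De_n/v_n^{\be}\to 0$) is lost. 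The repair is exactly the paper's choice for $\be\le1$: take $v_n=(\De_n\log(1/\De_n))^{1/\be}=1/\de_n$, i.e.\ the stable scale \emph{inflated by the logarithm}; then $\P(\tau(n,i)\ge m)\le C(\log(1/\De_n))^{-m}$, the exactly-two-jump term carries the limit, and nothing is lost because any pair with $\de_n\rho_1\rho_2>w$ and $\rho_2\le 1$ automatically has both members above $w/\de_n$. Note also that your dichotomy is cutoff-dependent, not intrinsic: for $\be>1$ the paper takes $v_n=\De_n^{1/(2\be)}\log(1/\De_n)$, under which the dominating term is precisely the cross product of a single large jump with the aggregated small-jump increment (your ``competing contribution''), while the two-large-jump term is negligible --- the opposite of your picture, and consistent with it only because the contributing pairs are log-uniformly spread over all scales. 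Your $L^2$ bound killing the cross term is correct for your cutoff, but be aware that the paper's treatment of the multi-jump remainders uses the subadditivity $(\sum_j a_j)^{\be}\le\sum_j a_j^{\be}$, unavailable for $\be>1$, which is presumably why it switches to the characteristic-function route there; a unified proof along your lines would have to replace that step.

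Second, the truncated-mean condition (your (iii)) does not follow from symmetry as claimed. The aggregated pair sum $Z_i^n$ is \emph{not} a symmetric random variable: flipping all jumps leaves every product invariant, and flipping one jump flips only the pairs containing it; already with three jumps $x_1,x_2,x_3$ of fixed moduli the conditional law of $x_1x_2+x_1x_3+x_2x_3$ is supported on four points summing to zero but not symmetric about zero. What is exactly symmetric is a single product $X_1\odot X_2$ of two \emph{independent} jumps (flip one of them) --- this is why the paper decomposes according to the number of large jumps: $\xi_i^n(2)$ and each $\eta_i^n(j,k)$ are symmetric, giving vanishing truncated means, while all remaining pieces are estimated in absolute value via Lemma \ref{lemneglig}. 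In your aggregated formulation you need $\E[Z_1^n\1_{\{\|Z_1^n\|_{\text{tr}}\le 1\}}]=o(\De_n)$, and the generic argument ($\E Z_1^n=0$ plus the tail bound $\P(\|Z_1^n\|_{\text{tr}}>x)\le C\De_n x^{-\be}$) only yields $O(\De_n)$, which is not enough; some decomposition of the type the paper uses seems unavoidable at this step.
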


\noindent
Let us give some remarks about the L\'evy measure $\nu_U$ of the limiting process $(U_t)_{t \geq 0}$. Since $\nu_U$ admits the representation 
\eqref{nuUdef}, where $\mu$ is a finite measure on the unit sphere $\S_{d\times d}$,  the process $(U_t)_{t \geq 0}$ is an $\R^{d\times d}$-valued $\be$-stable  L\'evy process. Furthermore, the support of the directional measure $\mu$ is given as
\[
\text{supp}(\mu)=\left\{\frac{\te_1\odot \te_2}{\|\te_1\odot \te_2\|_{\text{\rm tr}}}\in \S_{d\times d}  :~ \te_1,\te_2\in \text{supp}(H) \right\},
\]
which in particular shows that the jumps of $U$ have at most rank $2$.  The latter fact is not surprising since the jumps of $[L]_t^n$ and 
$[L]_{\De_n \lf t/\De_n \rf}$ have rank $1$. Finally, we remark that the specific centring via  $[L]_{\De_n \lf t/\De_n \rf}$ is needed to prove the \textit{functional} stable convergence as it guarantees that both processes jump at the same time. If one is interested in \textit{pointwise} stable convergence it suffices to use the more natural centring $[L]_t$.

\begin{rem} \label{remsym} \rm 
(a) The symmetry of the directional measure $H$, and hence of the L\'evy process $L$, is assumed for simplicity of the representation of the limiting
process $U$. As it has been demonstrated in \cite{DJT13} in the univariate setting, $U$ may contain an additional drift term when $H$ is not symmetric. 
Furthermore, according to the theory of  \cite{DJT13} one can relax the assumptions on $L$ to allow for certain
\textit{locally} $\be$-stable L\'evy processes. \\ 
(b) There exists an alternative representation of the L\'evy measure $\nu_U$ with respect to a different ``directional'' measure. Indeed, we may write
\begin{align*}
\nu_U(B) &= \frac{1}{2\be} \int_{\S_{d} \odot  \S_d} \mu'(dz) \int_{\R^+} \1_B(\rho z) \rho^{-1-\be} d\rho, 
\qquad B \in \mathcal B(\R^d\odot \R^d), \\
\mu' (z) &= \int_{\S_{d} ^2} \1_{z}(\te_1 \odot \te_2) H(d\te_1) H(d\te_2), \qquad  z\in \mathcal B(\S_d\odot \S_d).
\end{align*} 
In some sense it is a more natural representation of $\nu_U$. \qed
\end{rem}

\noindent
Now, we would like to determine the asymptotic theory for the eigenvalues and eigenvectors of the estimator $[L]_t^n$.  Let us recall a standard result on differentiability of eigenvalues/eigenvectors considered as functions of the underlying matrix. Consider a symmetric matrix $A_0$ with distinct eigenvalues $\la_1,\ldots,\la_d>0$ and let $v_1,\ldots,v_d\in \R^d$ be the corresponding eigenvectors with unit length. 
Then, using differential notation, we obtain the identities
\bee \label{formula}
d\la_i = v_i^{\top} (dA) v_i, \qquad dv_i=(\la_i I_d - A_0)^{+} (dA) v_i, 
\eee 
where $(\la_i I_d - A_0)^{+}$ denotes the \textit{Moore-Penrose pseudoinverse} of the matrix $\la_i I_d - A_0$. We recall that a Moore-Penrose pseudoinverse $A^+$ of a matrix $A \in \R^{d\times d}$ is defined via four properties: (i) $AA^+ A=A$, (ii) $A^+A A^+=A^+$, (iii) $AA^+$ is Hermitian and
(iv) $A^+ A$ is Hermitian. As a consequence of these identities we deduce the following statement.

\begin{cor} \label{cor1} 
Assume that $\text{\rm span}(\text{\rm supp}(H))=\R^{d}$. Denote by $\la(t)=(\la_1(t),\ldots, \la_d(t))^{\top}$ (resp. $\la^n(t)=(\la_1^n(t),\ldots, \la_d^n(t))^{\top}$) 
and $v(t)=(v_1(t),\ldots, v_d(t))$ (resp. $v^n(t)=(v_1^n(t),\ldots, v_d^n(t))$) the eigenvalues/eigenvectors of $[L]_t$ (resp. of $[L]_t^n$). 
Then, for a fixed $t>0$, we obtain the stable convergence
\bee \label{eigenstab}
\de_n\left(\la^n(t) -\la(t), v^n(t) -v(t) \right) \stab \left(\left(v_i(t)^{\top} U_t v_i(t) \right)_{1\leq i \leq d},  
\left((\la_i(t) I_d - [L]_t)^{+} U_t v_i(t)\right)_{1\leq i \leq d}    \right),
\eee
where the process $(U_t)_{t\geq 0}$ has been defined in Theorem \ref{th1}. 
\end{cor}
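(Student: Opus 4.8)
The plan is to read the eigenvalue/eigenvector map as a smooth nonlinear transformation and to run a delta-method argument for stable convergence, using the first-order perturbation identities \eqref{formula} to identify the limiting differential, with Proposition \ref{prop1} ensuring that we stay in the differentiable regime. First I would reduce to the pointwise version of Theorem \ref{th1}. Since $U$ is a L\'evy process it is a.s.\ continuous at the fixed deterministic time $t$, so the evaluation map at $t$ is a.s.\ continuous on the Skorokhod space; combined with the discussion following Theorem \ref{th1} (which permits the natural centring at a fixed time) this gives
\[
Z_n := \de_n\left([L]_t^n - [L]_t\right) \stab U_t .
\]
In particular $Z_n$ is stably convergent, hence tight, so $[L]_t^n - [L]_t$ is of order $\de_n^{-1}$ in probability and $[L]_t^n \toop [L]_t$.

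By Proposition \ref{prop1} there is an event of full probability on which $[L]_t$ is symmetric with $d$ distinct, strictly positive eigenvalues. On the open set $\mathcal O$ of symmetric matrices with distinct eigenvalues the map $\Phi : A \mapsto (\la(A), v(A))$, sending a matrix to its ordered eigenvalues and unit-length, sign-normalised eigenvectors, is analytic, and its differential at $A_0$ is given by \eqref{formula}. Since $[L]_t \in \mathcal O$ almost surely and $[L]_t^n \toop [L]_t$, for large $n$ the matrix $[L]_t^n$ also lies in $\mathcal O$; after fixing the signs of $v_i^n(t)$ by $\lan v_i^n(t), v_i(t)\ran \ge 0$, a first-order Taylor expansion of $\Phi$ around $[L]_t$ yields
\[
\left(\la^n(t) - \la(t),\, v^n(t) - v(t)\right) = D\Phi([L]_t)\big([L]_t^n - [L]_t\big) + R_n ,
\]
with remainder $R_n$ bounded on the good event by an $\f$-measurable, a.s.\ finite constant (controlled by the spectral gap, which is positive by Proposition \ref{prop1}) times $\|[L]_t^n - [L]_t\|^2$. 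Multiplying by $\de_n$ and using that $[L]_t^n - [L]_t$ is of order $\de_n^{-1}$ in probability, the scaled remainder $\de_n R_n$ is of order $\de_n^{-1}$ in probability and hence $\toop 0$.

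It remains to pass to the limit in the leading term $\de_n D\Phi([L]_t)([L]_t^n - [L]_t) = D\Phi([L]_t)(Z_n)$. Writing
\[
g(A, M) = \left(\big(v_i(A)^{\top} M v_i(A)\big)_{1\le i\le d},\ \big((\la_i(A) I_d - A)^{+} M v_i(A)\big)_{1\le i\le d}\right),
\]
which is continuous on $\mathcal O \times \R^{d\times d}$, the leading term equals $g([L]_t, Z_n)$. The coefficients $v_i(t)$, $\la_i(t)$, $(\la_i(t)I_d - [L]_t)^{+}$ are $\f$-measurable, and by Theorem \ref{th1} the stable limit $U_t$ is independent of $\f$. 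Since $Z_n \stab U_t$ and $[L]_t$ is $\f$-measurable, the defining property of stable convergence gives the joint stable convergence $([L]_t, Z_n) \stab ([L]_t, U_t)$, and the continuous mapping theorem for stable convergence applied to $g$ on the full-measure set $\{[L]_t \in \mathcal O\}$ yields
\[
g([L]_t, Z_n) \stab g([L]_t, U_t),
\]
whose right-hand side is exactly the limit in \eqref{eigenstab}. Combining this with $\de_n R_n \toop 0$ and the converging-together (Slutsky-type) property for stable convergence finishes the proof.

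The hard part will be the third step: because $D\Phi$ must be evaluated at the random, $\f$-measurable matrix $[L]_t$, ordinary weak convergence is insufficient, and one must genuinely use stable convergence to carry the $\f$-measurable eigendata through the limit. The two points requiring care are (i) the joint stable convergence of $Z_n$ with the $\f$-measurable coefficient data and the applicability of the continuous mapping theorem on the good event $\{[L]_t \in \mathcal O\}$, where Proposition \ref{prop1} guarantees distinct eigenvalues so that $g$ is continuous and \eqref{formula} is valid, and (ii) the sign and indexing matching of $v^n(t)$ with $v(t)$, without which the eigenvector expansion is not meaningful.
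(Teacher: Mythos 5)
Your proposal is correct and follows essentially the same route as the paper: the paper's proof invokes Proposition \ref{prop1} for a.s.\ distinct positive eigenvalues, the smoothness of the eigendata maps with differential \eqref{formula}, and then the $\de$-method for stable convergence combined with Theorem \ref{th1}. Your write-up simply makes explicit the details the paper leaves implicit (reduction to the pointwise centring at $[L]_t$, the Taylor remainder control, the joint stable convergence with the $\f$-measurable eigendata, and the sign normalisation of the eigenvectors), all of which are handled correctly.
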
 

\begin{proof} 
According to Proposition \ref{prop1} the random eigenvalues $(\la_i)_{1\leq i \leq d}$ are distinct and strictly positive $\P$-almost surely. Hence, the mappings $\la(t)$ and $v(t)$ seen as functions in $[L]_t$ are infinitely often differentiable and the first derivatives are given by \eqref{formula}. Thus, 
the result follows from the $\de$-method for stable convergence and Theorem \ref{th1}.  
\end{proof}

\begin{rem} \rm
In this remark  we discuss two standard examples of L\'evy processes $L$. \\
(a) (i.i.d. case): We consider a symmetric $\be$-stable L\'evy process $L=(L^1,\ldots, L^d)$ with i.i.d. components. In this setting the directional measure $H$ 
of $G$ is proportional to the uniform measure on the set  $(\pm e_i )_{1\leq i\leq d}$, 
where $( e_i )_{1\leq i\leq d}$ is the standard orthonormal basis of $\R^d$, i.e. 
\[
H(\{\pm e_i\}) =a>0, \qquad \forall 1\leq i \leq d.
\]
Since the components of $L$ have no common jumps we have $[L]_t= \text{diag}([L^1]_t, \ldots, [L^d]_t)$.  
According to Remark  \ref{remsym} the support of $\mu'$ is  
given by the set  $(\pm e_i \odot e_j )_{1\leq i\leq j\leq d}$ and it holds that 
\[
\mu'(\{\pm e_i \odot e_j\}) =a^2, \qquad 1\leq i\leq j\leq d.
\] 
In other words, the elements $(U_t^{ij})_{1\leq i\leq j\leq d}$
are independent one-dimensional symmetric $\be$-stable L\'evy processes. Finally,  by Corollary \ref{cor1}, the limit of $\de_n(\la^n(t) -\la(t))$   
is an $\R^d$-valued symmetric $\be$-stable process with i.i.d. components.\\
(b) (uniform case): Here we consider a symmetric $\be$-stable L\'evy process $L$ with directional measure
\[
H(d\te) = a \1_{\S_d}(\te) d\te, \qquad a>0. 
\]
In this setting $[L]_t$ is a matrix-valued $\be/2$-stable L\'evy motion with directional measure 
$\widetilde{H}(A)= a \int_{\S_d} \1_A(\te^{\otimes 2}) d\te$, $A\in  \mathcal B(\S_{d\times d})$. 
The measure $\mu'$ associated with the limiting process $U$ is given by
\[
\mu' (z) = a^2 \int_{\S_{d} ^2} \1_{z}(\te_1 \odot \te_2) d\te_1 d\te_2.
\]
In this case it seems hard to compute the exact asymptotic distribution of the eigenvalues $\de_n(\la^n(t) -\la(t))$, but according to Corollary \ref{cor1}
it is necessarily $\be$-stable conditionally on $\f$ with dependent components. 
\qed
\end{rem}

\subsection{An extension of Theorem \ref{th1} }

In this section we will extend the statement of Theorem \ref{th1} to more general pure jump It\^o semimartingales. We recall that a $d$-dimensional It\^o semimartingale $(Z_t)_{t\geq 0}$ is characterised by its (random) local  triplet $(b_t^{Z}, c^{Z}_t, F^{Z}_t)$, where $B_t^Z=\int_0^t
b_s^{Z} ds$ is the finite variation part of $Z$, $C_t^Z=\int_0^t c_s^{Z} ds \in \R^{d\times d}$ is the quadratic variation of the continuous part of $Z$, and $\nu(dt,dx)= dt \otimes F_t^Z(dx)$ is the compensator 
of the random measure associated with the jump part of $Z$.  Here the stochastic processes $b^V$
and $c^V$ are optional, and 
\[
\int_0^t \int_{\R^d} \left(\|x\|^2 \wedge 1 \right) F_s^Z(dx) <\infty \qquad \text{for all } t>0.
\] 
We now consider a pure jump It\^o semimartingale of the form
\begin{align} \label{xnew}
X_t = \int_0^t \si_{s-} ~d\widetilde{L}_s + Y_t,
\end{align}
where $\si$ is a \cadlag~$\R^{d\times d}$-valued volatility process. In this representation 
$\widetilde{L}$ is a locally $\be$-stable process and $Y$ is a pure jump It\^o semimartingale, which will not affect the limit theory of Theorem \ref{th1}. To give more precise definitions of all involved processes we switch to the Grigelionis decomposition of $\widetilde{L}$. We write
\[
\widetilde{L}_t = \int_0^t \int_{\R^d}  x \1_{\{\|x\|\leq 1\}} (q-\overline{q})(ds,dx) + 
\int_0^t \int_{\R^d}  x \1_{\{\|x\|> 1\}} q(ds,dx), 
\] 
where $q$ is an integer-valued random measure on $\R_+\times \R^d$ with compensator 
$\overline{q}(dt,dx)=dt\otimes v_t(dx)$,  and
\begin{align} \label{vtdef}
v_t(dx) = f_t G(dx) + v'_t(x) dx. 
\end{align}
Here $(f_t)_{t \geq 0}$ is a positive real-valued c\'adl\'ag process and $v'_t(x) $ is a predictable signed process such that 
\begin{align} \label{vprimecon}
\int_{\R^d} \left(\|x\|^{\beta^{\prime}} \wedge 1 \right) |v'_t(x)| dx \quad \text{is locally bounded},
\end{align}
with $\beta^{\prime}<\beta$ and $\beta^{\prime} \in (0,1)$. In other words, the semimartingale 
$\widetilde{L}$ has the local triplet $(0,0,v_t(dx))$ and the activity of small jumps are dominated
by the first term $f_t G(dx)$ in the decomposition \eqref{vtdef} since $\beta^{\prime}<\beta$. This explains the notion of a  locally $\be$-stable process. The process $Y$ is supposed to have a local triplet $(0,0,F^{Y}_t)$ such that 
\begin{align} \label{Ycond}
\int_{\R^d} \left(\|x\|^{\beta^{\prime}} \wedge 1 \right) F_t^{Y}(dx) \quad \text{is locally bounded}
,
\end{align}
where $\beta^{\prime}$ is the same exponent as appears in \eqref{vprimecon} (note that assuming 
that the exponents in \eqref{vprimecon} and  \eqref{Ycond} are the same is not a restriction; if this is not satisfied we can always take the maximum of the two).

We remark that the described setting of models \eqref{xnew} is quite similar to the general framework of 
\cite{DJT13,T19}. To better understand the fine structure of the model  we present an equivalent decomposition of the process $X$. Following the calculations of \cite[Section 6.1]{T19} (and recalling that $G$ is symmetric) we can write 
\begin{align} \label{alternX}
X_t = \int_0^t \widetilde{\si}_{s-} ~ dL^{\prime}_s + Y^{\prime}_t + Y_t,
\end{align}
where $L'$ has the same law as $L$, $ \widetilde{\si}_{s}=f_s^{1/\be} \si_s$ and 
\begin{align*}
Y^{\prime}_t &= \int_0^t \int_{\R^d}  x \1_{\{\|x\|\leq 1\}} (q_1-\overline{q}_1)(ds,dx) + 
\int_0^t \int_{\R^d}  x \1_{\{\|x\|> 1\}} q_1(ds,dx) \\
& - \int_0^t \int_{\R^d}  x \1_{\{\|x\|\leq 1\}} (q_2-\overline{q}_2)(ds,dx) -
\int_0^t \int_{\R^d}  x \1_{\{\|x\|> 1\}} q_2(ds,dx)
\end{align*}
where $q_j$ is an integer-valued random measure on $\R_+\times \R^d$ with compensator 
$\overline{q}_j(dt,dx)=dt\otimes v_t^{j}(x)dx$, $j=1,2$, and  
\[
v_t^{1}(x)=  |v'_t(x)|, \qquad  v_t^{2}(x)=  2|v'_t(x)| \1_{\{v'_t(x)<0\}}.
\]
The main result of Theorem \ref{th1} extends to the more general setting of processes $(X_t)_{t \geq 0}$  as follows.

\begin{prop} \label{prop2}
Assume that $(\widetilde{\si}_t)_{t\geq 0}$ is an It\^o semimartingale with local random characteristics $(b_t^{\widetilde{\si}}, c_t^{\widetilde{\si}}, F_t^{\widetilde{\si}})$ such 
that the processes $(b_t^{\widetilde{\si}})_{t\geq 0}$, $(c_t^{\widetilde{\si}})_{t\geq 0}$ and $\int (\|x\|^2_{\text{\rm tr}} \wedge 1) F_t^{\widetilde{\si}} (dx)$ are locally bounded. Furthermore, suppose that 
conditions  \eqref{vprimecon} and \eqref{Ycond} hold for $\beta^{\prime}<\be$ with $\beta^{\prime} \in (0,1)$. Then we obtain the stable convergence
\bee \label{Xstab}
 \de_n \left( [X]_t^n - [X]_{\De_n \lf t/\De_n \rf} \right) \stab \int_0^t \widetilde{\si}_{s-} ~dU_s~ \widetilde{\si}_{s-}^{\top},
\eee
where the process $U$ has been defined in Theorem \ref{th1}. 
\end{prop}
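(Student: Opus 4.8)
The plan is to peel off the dominant $\be$-stable contribution, freeze the volatility over each sampling block, and then push the functional stable convergence of Theorem \ref{th1} through the bilinear map $A\mapsto \widetilde{\si} A\widetilde{\si}^{\top}$. As always for such statements, I would first localize: since stable convergence in the Skorokhod topology is a local property, a standard stopping-time argument lets me replace all the ``locally bounded'' hypotheses by genuine uniform boundedness, so that $\widetilde{\si}$, $b^{\widetilde{\si}}$, $c^{\widetilde{\si}}$, $\int(\|x\|^2_{\text{tr}}\wedge 1)F_t^{\widetilde{\si}}(dx)$, $f$, and the dominating integrals in \eqref{vprimecon}, \eqref{Ycond} are all bounded by a fixed constant. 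Writing $X'_t=\int_0^t\widetilde{\si}_{s-}\,dL'_s$ and $R=Y'+Y$ in the decomposition \eqref{alternX}, I expand $(\De_i^n X)^{\otimes 2}=(\De_i^n X')^{\otimes 2}+(\De_i^n X')\odot(\De_i^n R)+(\De_i^n R)^{\otimes 2}$ and treat the three groups separately.

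The first step is to show that, after multiplication by $\de_n$, the cross term and the pure-$R$ term (each with its proper centring) converge to $0$ in the u.c.p. sense. The structural input is that, by \eqref{vprimecon} and \eqref{Ycond} with $\beta'\in(0,1)$, the process $R$ has finite variation and jump activity index at most $\beta'<\be$, while $\de_n$ is calibrated to $\be$. The pure-$R$ fluctuation therefore lives at the slower $\beta'$-rate and is killed by $\de_n$. The cross term is more delicate because a crude Cauchy--Schwarz split loses the heavy-tail gain; instead I would truncate the jumps of $L'$ and of $R$ at a level vanishing with $\De_n$ and estimate the four resulting products using $L^p$-moments with $p<\be$ for the increments of $X'$, exploiting $1/\beta'>1/\be$ to absorb the factor $\de_n$. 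This is laborious but routine.

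The second step is to freeze the volatility, replacing $\De_i^n X'=\int_{(i-1)\De_n}^{i\De_n}\widetilde{\si}_{s-}\,dL'_s$ by $\widetilde{\si}_{(i-1)\De_n}\,\De_i^n L'$ (and likewise in the centring), and showing that the scaled error is negligible in the u.c.p. sense. The obstacle here is that, for $\be<2$, the increments of $L'$ have no second moment, so the error must be controlled through $L^p$-estimates with $p<\be$: one bounds $\widetilde{\si}_{s-}-\widetilde{\si}_{(i-1)\De_n}$ on each block using that $\widetilde{\si}$ is an It\^o semimartingale with bounded characteristics, and uses the boundedness of $\int(\|x\|^2_{\text{tr}}\wedge1)F_t^{\widetilde{\si}}(dx)$ together with the conditional independence built into the Grigelionis representation to rule out a resonance between the jumps of $\widetilde{\si}$ and the large jumps of $L'$.

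After these reductions the quantity of interest equals
\[
\de_n\sum_{i=1}^{\lf t/\De_n\rf}\widetilde{\si}_{(i-1)\De_n}\Big[(\De_i^n L')^{\otimes 2}-\big([L']_{i\De_n}-[L']_{(i-1)\De_n}\big)\Big]\widetilde{\si}_{(i-1)\De_n}^{\top},
\]
which is exactly the weighting of the process $U^n$ of Theorem \ref{th1} by the adapted \cadlag{} integrand under the bilinear map $A\mapsto\widetilde{\si}_{(i-1)\De_n}A\widetilde{\si}_{(i-1)\De_n}^{\top}$. The main obstacle is that stochastic integration is not continuous for the Skorokhod $J_1$-topology, so a continuous-mapping principle does not apply directly. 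Instead I would argue by Riemann-sum approximation: freezing $\widetilde{\si}$ further on a coarse grid $t_k=k/m$, each coarse block contributes $\widetilde{\si}_{t_{k-1}}$ sandwiching the increment of $U^n$ across that block; by Theorem \ref{th1} and the independence of $U$ from $\f$, these converge jointly and stably to the Riemann sum $\sum_k\widetilde{\si}_{t_{k-1}}(U_{t_k}-U_{t_{k-1}})\widetilde{\si}_{t_{k-1}}^{\top}$. Letting $m\to\infty$ after $\nto$, and using the $J_1$-tightness provided by Theorem \ref{th1} to control the coarse-graining error uniformly in $n$, identifies the limit as $\int_0^t\widetilde{\si}_{s-}\,dU_s\,\widetilde{\si}_{s-}^{\top}$, which is the assertion \eqref{Xstab}.
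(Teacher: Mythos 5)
Your overall architecture is the same as the paper's: the decomposition $X=\widetilde{X}+\widetilde{Y}$ with $\widetilde{X}_t=\int_0^t\widetilde{\si}_{s-}\,dL'_s$ and $\widetilde{Y}=Y'+Y$, negligibility of all terms involving $\widetilde{Y}$, freezing of $\widetilde{\si}$ at the left endpoint of each sampling interval, and transfer of Theorem \ref{th1} to the resulting weighted sums. The difference is one of execution: the paper disposes of the $\widetilde{Y}$-terms entry by entry via the polarization identity $yz=((y+z)^2-(y-z)^2)/4$ and the univariate Proposition 7.1 of \cite{DJT13}, handles the freezing error componentwise by Proposition 7.2 of \cite{DJT13}, and --- crucially --- passes from the frozen sums to $\int_0^t\widetilde{\si}_{s-}\,dU_s\,\widetilde{\si}_{s-}^{\top}$ by invoking a ready-made weighted stable convergence result, Lemma 6.9 of \cite{DJT13}. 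You instead attempt to prove this last step from scratch, and that is where your proposal has a genuine gap.

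Writing $\xi_i^n:=\de_n\bigl((\De_i^n L')^{\otimes 2}-([L']_{i\De_n}-[L']_{(i-1)\De_n})\bigr)$ and letting $\eta_n(i)$ be the left endpoint of the coarse block containing $(i-1)\De_n$, your coarse-grid argument needs the error
\[
\sum_{i=1}^{\lf t/\De_n\rf}\left(\widetilde{\si}_{(i-1)\De_n}\,\xi_i^n\,\widetilde{\si}_{(i-1)\De_n}^{\top}-\widetilde{\si}_{\eta_n(i)}\,\xi_i^n\,\widetilde{\si}_{\eta_n(i)}^{\top}\right)
\]
to vanish as $m\to\infty$ uniformly in $n$, and you assert that the $J_1$-tightness of $U^n$ delivers this. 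It does not when $\be\geq1$. Tightness controls the oscillation of the partial-sum path of $U^n$ over a coarse block, which bounds the error only if the weights were constant within the block; here they vary with $i$, so any termwise bound involves $\sum_i\|\xi_i^n\|_{\text{tr}}$, the total variation of $U^n$ over the block, which diverges as $n\to\infty$ for $\be\in[1,2)$ since the limit $U$ has infinite variation. (Abel summation does not rescue the argument: it trades this for the total variation of $\widetilde{\si}$ over the block, which is infinite whenever $c^{\widetilde{\si}}\neq 0$.) The repair is precisely the content of the lemma the paper cites: split each $\xi_i^n$ at a fixed threshold; the large-increment part has a tight number of summands and is handled by oscillation bounds, while the small-increment part is, by symmetry of $G$, a martingale array whose summands satisfy $\E[\|\xi_i^n\|_{\text{tr}}^2\1_{\{\|\xi_i^n\|_{\text{tr}}\leq1\}}]=O(\De_n)$ --- a bound that comes from the L\'evy-measure computations \emph{inside} the proof of Theorem \ref{th1} (cf. \eqref{cond2a} and \eqref{cltcond2}), not from its statement --- so that Doob/Lenglart estimates apply. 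Without this martingale/big-jump dichotomy your final step fails. A secondary inaccuracy: no independence between $\widetilde{\si}$ and $L'$ is assumed, nor does the Grigelionis representation provide any --- they may jump simultaneously. The freezing error near common jumps is harmless for a different reason: the weight is the pre-jump value $\widetilde{\si}_{(i-1)\De_n}$ and the limiting integrand is the left limit $\widetilde{\si}_{s-}$, which is exactly what the proof of Proposition 7.2 of \cite{DJT13} exploits.
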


\begin{proof}
The proof of Proposition \ref{prop2} is obtained by a local approximation of the process $(\widetilde{\si}_t)_{t\geq 0}$, application of Theorem \ref{th1} and an argument showing the negligibility of the process
$Y^{\prime} +Y$. We first introduce the decomposition
\[
X_t = \widetilde{X}_t +  \widetilde{Y}_t \quad \text{with} \quad 
 \widetilde{X}_t =  \int_0^t \widetilde{\si}_{s-} ~ dL^{\prime}_s, \quad
  \widetilde{Y}_t = Y^{\prime}_t +Y_t.
\]
We start with the treatment of the process $\widetilde{X}$. 
Using It\^o formula we can write 
\[
\de_n \left( [\widetilde{X}]_t^n - [\widetilde{X}]_{\De_n \lf t/\De_n \rf} \right) = \de_n \sum_{i=1}^{\lf t/\De_n \rf} 
\int_{(i-1)\De_n}^{i\De_n} \left( \widetilde{X}_{s-} - \widetilde{X}_{(i-1)\De_n} \right)\odot  d\widetilde{X}_s.
\]
Applying the latter we obtain the decomposition 
\[
\de_n \left( [\widetilde{X}]_t^n - [\widetilde{X}]_{\De_n \lf t/\De_n \rf} \right) =  \sum_{i=1}^{\lf t/\De_n \rf} \left( \ze_i^n + \ze_i^{\prime n}  \right)
\]
with 
{\small
\begin{align*}
 \ze_i^n &= \de_n \widetilde{\si}_{(i-1)\De_n} \int_{(i-1)\De_n}^{i\De_n} \left( L^{\prime}_{s-} - L^{\prime}_{(i-1)\De_n} \right)\odot  dL^{\prime}_s  \widetilde{\si}_{(i-1)\De_n}^{\top}, \\[1.5 ex]
 \ze_i^{\prime n} &= \de_n \Big( \int_{(i-1)\De_n}^{i\De_n} \left( \widetilde{X}_{s-} - \widetilde{X}_{(i-1)\De_n} \right)\odot  d\widetilde{X}_s -  \widetilde{\si}_{(i-1)\De_n} \int_{(i-1)\De_n}^{i\De_n} \left( L^{\prime}_{s-} - L^{\prime}_{(i-1)\De_n} \right)\odot  dL^{\prime}_s  \widetilde{\si}_{(i-1)\De_n}^{\top}\Big) .
\end{align*}}
According to \cite[Lemma 6.9]{DJT13} and Theorem \ref{th1} we have the functional stable convergence
\[
 \sum_{i=1}^{\lf t/\De_n \rf}  \ze_i^n \stab \int_0^t  \widetilde{\si}_{s-} ~dU_s~  \widetilde{\si}_{s-}^{\top}.
\]
Thus, it suffices to show that $\sum_{i=1}^{\lf t/\De_n \rf}  \ze_i^{\prime n} \ucp 0$. Since the latter can be proved componentwise, we may use the univariate argument of  \cite[Proposition 7.2]{DJT13} to
conclude that 
\[
\de_n \left( [\widetilde{X}]_t^n - [\widetilde{X}]_{\De_n \lf t/\De_n \rf} \right) \stab 
 \int_0^t  \widetilde{\si}_{s-} ~dU_s~  \widetilde{\si}_{s-}^{\top}.
\]
Now, we show the negligibility of the contribution of $\widetilde{Y}$. First of all, we notice that due to assumptions  \eqref{vprimecon} and \eqref{Ycond} each component $\widetilde{Y}^j$ of $\widetilde{Y}=(\widetilde{Y}^1,\ldots, \widetilde{Y}^d)$ has the local triplet $(0,0, F_t^{\widetilde{Y}^j})$ and 
\begin{align} \label{Yjcond}
\int_{\R} \left(|y|^{\beta^{\prime}} \wedge 1 \right) F_t^{\widetilde{Y}^j}(dy) \quad \text{is locally bounded}
\end{align}
for $\beta^{\prime}<\be$ with $\beta^{\prime} \in (0,1)$. Let us set 
\[
R_t^n := \de_n \left( [X]_t^n - [X]_{\De_n \lf t/\De_n \rf} \right)
-\de_n \left( [\widetilde{X}]_t^n - [\widetilde{X}]_{\De_n \lf t/\De_n \rf} \right)
\]
and $R^n=(R^n(i,j))_{1\leq i,j\leq d}$. Since the diagonal terms $R^n(j,j)$ correspond to the univariate setting investigated in  \cite{DJT13}, we can use Proposition 7.1 therein under condition 
\eqref{Yjcond} to deduce that 
\[
R^n(j,j) \ucp 0. 
\]
Similarly, applying the identity $yz=((y+z)^2 - (y-z)^2)/4$, we can show that $R^n(i,j) \ucp 0$ 
for $i\not= j$. This completes the proof of Proposition \ref{prop2}. 
\end{proof}

\noindent
When the spectrum of the quadratic variation $[X]_t$ is non-degenerate in the sense of Proposition \ref{prop1}, 
a direct analogue of Corollary \ref{cor1} gives 
the asymptotic theory for estimation of eigenvalues/eigenvectors of $[X]_t$.  Unfortunately, in the more general setting of the model \eqref{xnew} it seems hard to find a simple condition that gives this non-degeneracy condition. A notable exception are processes $X$ with $Y=Y^{\prime}=0$ and $\widetilde{\si}$ being independent of $L^{\prime}$. In this framework it suffices to assume that the measure $G$ satisfies the condition of Proposition \ref{prop1} and $\text{rank}(\widetilde{\si}_{t-})=d$ $\P$-almost surely 
for all $t>0$. Indeed, under these conditions the proof of  Proposition \ref{prop1} remains valid conditionally on the process $(\widetilde{\si}_t)_{t\geq 0}$.

\section{A subsampling procedure} \label{sec3}
\setcounter{equation}{0}
\renewcommand{\theequation}{\thesection.\arabic{equation}}

We remark that the theoretical result of \eqref{Lstab} in Theorem \ref{th1} is hard to use for statistical applications since the directional measure $H$ 
on $\S_d$ is unknown in general. The estimation of this infinite dimensional object is far from obvious and even if $H$ were known there exist no reliable numerical methods to generate the limiting process $(U_t)_{t\geq 0}$. Instead our aim is to propose a subsampling method, which automatically adapts to the unknown limiting distribution.

The main idea is to construct $M$ independent copies $\ze_{n,m}$, $m=1,\ldots,M$, such that 
$$\P(\ze_{n,m}\in A) \to \overline \P\left(U_1 \in A \right ) \qquad \text{as } n \to \infty,$$ 
for any open cylindrical set $A \in \R^{d\times d}$ (recall that $\overline \P$ denotes the probability measure on the extended space). 
Then, as $M\to \infty$, we deduce that 
\[
\frac{1}{M} \sum_{m=1}^M \1_{\{\ze_{n,m} \in A\}}  \toLt  \overline \P\left(U_1 \in A \right ). 
\] 
Hence, the left hand side gives a consistent estimator of the unknown distribution of $U_1 $. Unfortunately, it seems impossible to achieve the goal with just one additional scale $M$, and we will use an additional parameter $k\to \infty$. We first divide the interval 
$(0,1]$ into $M$ equidistant blocks $I_m:=((m-1)/M, m/M]$ for $m=1,\ldots, M$. Then we introduce the empirical quadratic variation over the block
$I_m$ computed at frequency $\De_n$:
\bee \label{zmn}
z(\De_n)_m^n:= \sum_{i:~ (i-1)\De_n, i\De_n \in I_m} (\De_i^n L)^{\otimes 2}. 
\eee
Since the convergence rate $\de_n=:\de(\De_n)$ introduced in  \eqref{den} depends on the unknown parameter $\be \in (0,2)$, we need to construct 
its estimator. For $p\in (-1/2, 0)$ we define the statistic
\[
r_n:= \frac{\sum_{i=2}^{\lf 1/\De_n \rf} \|L_{i\De_n} - L_{(i-2)\De_n}\|^{p}}{\sum_{i=1}^{\lf 1/\De_n \rf} \|\De_i^n L\|^{p}} \toop 2^{p/\be},
\] 
where the convergence follows by self-similarity and the law of large numbers. Hence, $\widehat{\be}_n:= p \log(2)/\log(r_n)$ is a $\De_n^{-1/2}$-consistent estimator of $\be$ by the central limit theorem since $\E[\|\De_i^n L\|^{2p}]<\infty$ for any $p\in (-1/2, 0)$. Similarly, we can define an estimator $\widehat{\be}_{n,m}$, which is built upon observations on the interval $I_m$. By analogous reasoning this estimator satisfies 
$\widehat{\be}_{n,m}-\be=O_{\P}((\De_n M)^{-1/2})$. 
We now set 
\[
\widehat \de_m (\De_n) := (\De_n \log (1/ \De_n))^{-1/\widehat{\be}_{n,m}} \quad \text{and} \quad \ze_{n,m,k}:= \widehat \de_m (k\De_n) M^{1/\widehat{\be}_{n,m}}
\left(z(k\De_n)_m^n - z(\De_n)_m^n \right).
\]
Finally, we introduce the statistic 
\bee \label{Snx}
S_n^{M,k}(A):=  \frac{1}{M} \sum_{m=1}^M \1_{\{\ze_{n,m,k} \in A\}}. 
\eee
Let us briefly describe the intuition behind this statistic. First of all, we note that the terms $z(k\De_n)_m^n, z(\De_n)_m^n$, $m=1,\ldots,M$, are independent and identically distributed. In the next step we observe that, according to Theorem \ref{th1},  the quantity 
$\de_n(z(\De_n)_m^n - ([L]_{m/M} - [L]_{(m-1)/M}))$ is close in distribution to $U_{m/M} - U_{(m-1)/M}$, which due to self-similarity has the same distribution as $M^{-1/\be} U_1$. However, since the quadratic variation over $I_m$ is an unknown quantity, it needs to be replaced by an empirical quantity without affecting the asymptotic theory. To do so we replace the statistic $z(\De_n)_m^n$ by  $z(k\De_n)_m^n$ for some $k\to \infty$ while using   $z(\De_n)_m^n$ as a proxy for the quadratic variation over $I_m$. Finally, adjusting and estimating the convergence rate, we obtain the quantity 
$\ze_{n,m,k}$.  The formal convergence result is as follows.

\begin{prop} \label{propS}
Assume that $M,k\to \infty$ such that $kM\De_n \to 0$. For any open cylindrical set $A \in \R^{d\times d}$ it holds that 
\[
S_n^{M,k}(A) \toLt  \overline \P\left(U_1 \in A \right ).
\]
\end{prop}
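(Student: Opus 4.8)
The plan is to establish the $L^2$-convergence through the bias--variance decomposition
\[
\E\big[(S_n^{M,k}(A) - \overline\P(U_1\in A))^2\big] = \mathrm{Var}\big(S_n^{M,k}(A)\big) + \big(\E[S_n^{M,k}(A)] - \overline\P(U_1\in A)\big)^2,
\]
and to show that both summands vanish. First I would treat the variance, which is the easy part. Since $L$ has independent and stationary increments and, for fixed $n$, each $\ze_{n,m,k}$ is a measurable function of the increments of $L$ lying strictly inside the block $I_m$ only, the variables $\ze_{n,1,k},\ldots,\ze_{n,M,k}$ are independent. Hence
\[
\mathrm{Var}\big(S_n^{M,k}(A)\big) = \frac{1}{M^2}\sum_{m=1}^M \mathrm{Var}\big(\1_{\{\ze_{n,m,k}\in A\}}\big) \le \frac{1}{4M} \to 0,
\]
because $M\to\infty$; no identical distribution across blocks is needed here.

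For the bias I would show that $\E[S_n^{M,k}(A)] \to \overline\P(U_1\in A)$. By stationarity and self-similarity the $\ze_{n,m,k}$ are identically distributed up to a negligible boundary effect (grid points not aligning with the block endpoints), so this reduces to $\P(\ze_{n,1,k}\in A)\to\overline\P(U_1\in A)$. Since $U_1$ is $\be$-stable and its law is absolutely continuous on its support (by the argument of Proposition \ref{prop1}), every open cylindrical set $A$ is a continuity set for $U_1$; thus by the portmanteau theorem it suffices to prove the weak convergence $\ze_{n,1,k}\cid U_1$. To this end I would write, with $[L]_{I_1}$ denoting the quadratic variation of $L$ over $I_1=(0,1/M]$,
\[
\ze_{n,1,k} = \wde_1(k\De_n)\,M^{1/\wbe_{n,1}}\big(z(k\De_n)_1^n - [L]_{I_1}\big) - \wde_1(k\De_n)\,M^{1/\wbe_{n,1}}\big(z(\De_n)_1^n - [L]_{I_1}\big).
\]
The second (proxy) term is negligible: the fine-scale error $z(\De_n)_1^n - [L]_{I_1}$ is smaller than the coarse-scale fluctuation by a factor of order $k^{-1/\be}$ up to bounded logarithmic corrections, so this term is $\mathrm{o}_{\P}(1)$ as soon as $k\to\infty$, irrespective of the relative growth of $M$ and $k$.

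The main term is handled by self-similarity. Writing $\widetilde{L}_u := M^{1/\be} L_{u/M}$, the process $(\widetilde{L}_u)_{u\in[0,1]}$ has the same law as $L$ on $[0,1]$, and the block statistic rescales as
\[
z(k\De_n)_1^n - [L]_{I_1} \eqschw M^{-2/\be}\big([\widetilde{L}]^{(Mk\De_n)}_1 - [\widetilde{L}]_1\big),
\]
a fixed-interval object at frequency $Mk\De_n\to0$. Applying the pointwise version of Theorem \ref{th1} to $\widetilde{L}$ gives $\de(Mk\De_n)\big([\widetilde{L}]^{(Mk\De_n)}_1 - [\widetilde{L}]_1\big)\cid U_1$, so the honest normalisation producing the limit $U_1$ is $M^{2/\be}\de(Mk\De_n)$. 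Comparing this with the prescribed normalisation $M^{1/\be}\de(k\De_n)$, their ratio equals $\big(\log(1/(Mk\De_n))/\log(1/(k\De_n))\big)^{1/\be}$, and $\ze_{n,1,k}\cid U_1$ precisely when this ratio tends to $1$. Finally one must show that replacing $\be$ by $\wbe_{n,1}$ in the exponents $M^{1/\wbe_{n,1}}$ and $\wde_1(k\De_n)$ is asymptotically neutral; using $\wbe_{n,1}-\be=\mathrm{O}_{\P}((M\De_n)^{1/2})$ this amounts to $(M\De_n)^{1/2}\log M\to0$ and $(M\De_n)^{1/2}\log(1/(k\De_n))\to0$.

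The hard part is this last step: controlling the joint limit $n,M,k\to\infty$ so that the logarithmic factor of the prescribed normalisation matches the one dictated by the self-similar rescaling, i.e. $\log(1/(Mk\De_n))\sim\log(1/(k\De_n))$, equivalently $\log M=\mathrm{o}(\log(1/(k\De_n)))$ (sub-polynomial growth of $M$), together with the mild plug-in rate conditions above. The bare assumption $kM\De_n\to0$ guarantees only that the number $1/(Mk\De_n)$ of coarse increments per block diverges — which is what makes Theorem \ref{th1} applicable after rescaling — but not by itself the exact matching of the normalising constants. Once this matching is secured we obtain $\ze_{n,1,k}\cid U_1$, the bias vanishes, and combined with the variance bound the stated $L^2$-convergence follows.
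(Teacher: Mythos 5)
Your plan is, in substance, the paper's own proof: the same reduction to a single block (the paper invokes ``i.i.d.\ plus $M\to\infty$'' where you split bias and variance with the bound $\mathrm{Var}(S_n^{M,k}(A))\le 1/(4M)$), the same self-similarity rescaling so that Theorem \ref{th1} can be applied at the coarse frequency, the same negligibility of the proxy term $z(\De_n)_m^n$ via $k\to\infty$, and the same plug-in step for $\widehat{\be}_{n,m}$ (your rate $O_{\P}((M\De_n)^{1/2})$ is the correct reading of the paper's statement of $(\De_n M)^{-1/2}$-consistency).

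The one place where you depart from the paper is the step you single out as the hard part, and there your caution is warranted: it is exactly the step the paper asserts without justification. The paper's argument for \eqref{conver1} rests on the claim $\de(\De_n)M^{-1/\be}/\de(M\De_n)\to 1$; because of the logarithmic factor in the rate \eqref{den}, this ratio equals
\[
\left(\frac{\log(1/(M\De_n))}{\log(1/\De_n)}\right)^{1/\be}
=\left(1-\frac{\log M}{\log(1/\De_n)}\right)^{1/\be},
\]
which tends to $1$ if and only if $\log M=o(\log(1/\De_n))$, and the coarse-scale analogue requires $\log M=o(\log(1/(k\De_n)))$, exactly as you say. Neither condition follows from $M,k\to\infty$ and $kM\De_n\to 0$: with $M=\De_n^{-1/2}$ and $k=\De_n^{-1/4}$ one has $kM\De_n=\De_n^{1/4}\to 0$, yet the ratio tends to $(1/3)^{1/\be}\neq 1$, so $\ze_{n,m,k}$ converges to a deterministically rescaled copy of $U_1$ rather than to $U_1$ itself. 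The proposition must therefore be read with an implicit sub-polynomial growth condition on $M$ relative to $1/(k\De_n)$; under that extra condition your two plug-in requirements $(M\De_n)^{1/2}\log M\to 0$ and $(M\De_n)^{1/2}\log(1/(k\De_n))\to 0$ hold automatically, since $(M\De_n)^{1/2}$ then decays like a positive power of $\De_n$. Your portmanteau remark (that open cylindrical sets should be $U_1$-continuity sets, which the absolute-continuity argument of Proposition \ref{prop1} supports) is likewise a detail the paper passes over in silence. In short: your proposal is correct, follows the paper's route, and makes explicit a hypothesis that the paper's own argument cannot do without.
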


\begin{proof}
Since $M\to \infty$ and the random variables $(\ze_{n,m,k})_{1\leq m \leq M}$ are i.i.d, we just need to prove that 
\[
\ze_{n,m,k} \schw U_1 \qquad \text{as }  M,k\to \infty, ~ kM\De_n \to 0,
\]
for a fixed $m$. We analyse the various errors associated with $\ze_{n,m,k}$. We first prove the convergence
\bee \label{conver1}
 \de (\De_n) M^{1/\be} \left( z(\De_n)_m^n - \left([L]_{\frac{m}{M}} - [L]_{\frac{m-1}{M}} \right) \right) \schw U_1 \qquad
 \text{as }  M\to \infty, ~ M\De_n \to 0.
\eee 
We set $RV(\De_n)=\sum_{i=1}^{\lf 1/\De_n \rf} (\De_i^n L)^{\otimes 2}$. Due to self-similarity of the L\'evy process $L$ and
 its quadratic variation $[L]$, we obtain the identity in distribution 
\begin{align*}
\de (\De_n) M^{1/\be} \left( z(\De_n)_m^n - \left([L]_{\frac{m}{M}} - [L]_{\frac{m-1}{M}} \right) \right) &\eqschw
\de (\De_n) M^{-1/\be} \left(RV(M\De_n) - [L]_1\right) + o_{\P}(1). 
\end{align*} 
We conclude from Theorem \ref{th1} that $\de (M\De_n)\left(RV(M\De_n) - [L]_1\right) \schw U_1$. On the other hand, we have
that $\de (\De_n) M^{-1/\be}/ \de (M\De_n) \to 1$. Hence, the convergence in \eqref{conver1} holds.
Applying this convergence to $k \De_n$ instead of $\De_n$ we also deduce the convergence
\[
 \de (k\De_n) M^{1/\be} \left( z(k\De_n)_m^n - \left([L]_{\frac{m}{M}} - [L]_{\frac{m-1}{M}} \right) \right) \schw U_1.
\]
Combining the latter with the original statement \eqref{conver1}, we obtain that 
\[
 \de (k\De_n) M^{1/\be} \left( z(k\De_n)_m^n - z(\De_n)_m^n\right) \schw U_1,
\]
because $k\to \infty$. Finally, recalling that $\widehat{\be}_{n,m}-\be=O_{\P}((\De_n M)^{-1/2})$, it holds that 
\[
\widehat \de_m (k\De_n) M^{1/\widehat\be_{n,m}}  - \de (k\De_n) M^{1/\be} =  O_{\P}\left(\de (k\De_n) \De_n^{1/2} M^{-1/2} \log(1/\De_n) \right). 
\] 
Consequently, we obtain the statement of  Proposition  \ref{propS}.
\end{proof}

\begin{rem} \rm
We remark that the ratio based estimator $\widehat{\beta}_n$  is just one potential proxy for the stability parameter $\beta$. Alternatively, we could use any other 
estimator of $\be$ with polynomial convergence rate in the subsampling procedure. \qed
\end{rem}

\noindent
Suppose that we are aiming to estimate a quantity $f([L]_1)$ for a function $f:\R^{d \times d} \to \R$, which is continuously differentiable in a neighbourhood of $[L]_1$. One important example is the function $f(A)=\la_{\max}(A)$, which satisfies the differentiability condition under assumptions of Proposition \ref{prop1}. 
 Applying the $\de$-method to Theorem \ref{th1} we deduce the stable convergence
\[
\de_n\left(f([L]_1^n) -f([L]_1) \right) \stab \lan \nabla f([L]_1), U_1 \ran_{\text{tr}}. 
\]
In this case we can apply a similar subsampling procedure to obtain confidence regions for the unknown parameter.  Indeed, defining the quantity
\[
\ze_{n,m,k}(f):= \widehat \de_m (k\De_n) M^{1/\widehat\be_{n,m}}
\left(f\left(z(k\De_n)_m^n\right) - f\left(z(\De_n\right)_m^n )\right),
\]
we deduce the convergence
\bee
S_n^{M,k}(f,w):=  \frac{1}{M} \sum_{m=1}^M \1_{\{\ze_{n,m,k}(f) \leq w\}} \toLt  \overline \P \left(\lan \nabla f([L]_1), U_1 \ran_{\text{tr}} \leq w \right) 
\eee
for any $w\in \R$.

In the framework of integral processes $X$ defined in  \eqref{xnew}, things seem to be more complicated (even when $Y^{\prime}+Y=0$). We can propose a modified subsampling method, but, due to the stochastic nature of $\widetilde{\si}$, it will only assess the \textit{conditional} distribution of the limit in  \eqref{Xstab} given the path $(\widetilde{\si}_s(\om))_{s\geq 0}$ for a fixed $\om \in \Om$. However, this does not seem to suffice to construct confidence regions for the quadratic variation $[X]_t$.

\section{Proof of Theorem \ref{th1} } \label{sec4}
\setcounter{equation}{0}
\renewcommand{\theequation}{\thesection.\arabic{equation}}

All positive constants appearing in the proofs are denoted by $C$ although they may change from line to line. 

The basic idea behind the proof of Theorem \ref{th1} is to show the functional weak convergence (with respect to the Skorokhod $J_1$-topology)  
\[
\left( L_{\De_n \lf t/\De_n \rf}, U_t^n \right) \schw \left( L_t, U_t \right),   
\]
which implies the functional stable convergence $U^n \stab U$ according to e.g. \cite[Lemma 6.9]{DJT13}. Following the arguments of 
\cite{DJT13} (cf.~the proof of Proposition 7.3 therein) the laws of $L_{\De_n \lf \cdot /\De_n \rf}$ and $U^n$ factorise asymptotically, which guarantees the independence of the limits $L$ and $U$. Hence, the crucial step is the proof of the functional weak convergence
\bee \label{Uconv}
U^n \schw U,
\eee
which we will show in the following. 

\subsection{Main decompositions}

We are mainly following the decompositions proposed in \cite{DJT13} adapted to the multivariate setting. First of all, instead of dealing with the original L\'evy measure
defined at \eqref{repG}, we may truncate it and  work with $G$ restricted to the unit ball $\|x\|\leq 1$ (see the argument behind 
\cite[Assumption S2]{DJT13}).  So, from now on we assume that 
\[
G(dx) = \frac{\1_{(0,1]}(\rho)}{ \rho^{1+\be}} d\rho H(d\te), \qquad x=(\rho,\te) \in \R_+ \times \S_d.
\]
An important decomposition is given by 
\[
L=M(v) + A(v), \qquad v \in (0,1),
\]
where $A(v)_t=\sum_{s\leq t} \De L_s \1_{\{\|\De L_s\|>v\}}$, which corresponds to the classical L\'evy-It\^o decomposition (recall that the directional measure $H$, and hence $G$, is symmetric). When $\be> 1$, $M(v)$ is a martingale.
Now, we set
\[
v_n = 
\begin{cases}
\De_n^{1/(2\be)} \log(1/\De_n)\,, & \text{if } \be>1 \\
(\De_n \log(1/\De_n))^{1/\be}\,, & \text{if } \be \leq 1
\end{cases} 
\] 
and define $M^n=M(v_n), A^n=A(v_n)$.  We note that the process $M^n$ has the L\'evy triplet $(0,0,G(dx) \1_{\{\|x\| \leq v_n \}})$ and $A^n$ is a compound Poisson process with intensity $\overline{G}(v_n):= G(\{x\in \R^d:~v_n<\|x\| \leq 1\})$ and jump distribution $G(dx) \1_{\{v_n<\|x\| \leq 1 \}}/ \overline{G}(v_n)$, and $M^n$ and $A^n$ are independent.

Finally, we denote by $\tau(n,i)$ (resp. $T(n,i)_j$) the number of jumps (resp. the time of the $j$th jump) with norm larger than $v_n$
in the interval $((i-1)\De_n, i\De_n]$.  Due to It\^o formula we have that 
\bee \label{sumstat}
U_t^n= \de_n \left( [L]_t^n - [L]_{\De_n \lf t/\De_n \rf} \right) = \sum_{i=1}^{\lf t/\De_n \rf} \xi_i^n, 
\qquad \xi_i^n =  \de_n \int_{(i-1)\De_n}^{i\De_n} \left( L_{s-} - L_{(i-1)\De_n} \right)\odot  dL_s.
\eee
Now, we further decompose the quantity $\xi_i^n$ in terms of $M^n$ and $A^n$, and according to the number of jumps of $A^n$ within the interval 
$((i-1)\De_n, i\De_n]$. More specifically, we have that $\xi_i^n=\sum_{j=1}^5 \xi_i^n(j) $ with 
\begin{align*}
\xi_i^n(1) &= \de_n \De_i^n M^n \odot  \De L_{T(n,i)_1} \1_{\{\tau(n,i)=1\}} \\[1.5 ex]
\xi_i^n(2) &= \de_n \De L_{T(n,i)_1}  \odot  \De L_{T(n,i)_2} \1_{\{\tau(n,i)=2\}} \\[1.5 ex]
\xi_i^n(3) &= \de_n \De_i^n M^n \odot  \De_i^n A^n \1_{\{\tau(n,i)\geq 2\}} \\[1.5 ex]
\xi_i^n(4) &=  \de_n \int_{(i-1)\De_n}^{i\De_n} \left( M_{s-}^n - M_{(i-1)\De_n}^n \right)\odot  dM_s^n \\[1.5 ex]
\xi_i^n(5) &=  \de_n \int_{(i-1)\De_n}^{i\De_n} \left( A_{s-}^n - A_{(i-1)\De_n}^n \right)\odot  dA_s^n \1_{\{\tau(n,i)\geq 3\}}
\end{align*} 
We will see that $\xi_i^n(1)$ represents the dominating part when $\be>1$, while $\xi_i^n(2)$ is dominating when $\be \leq 1$.

\subsection{Preliminary results} 
In this subsection we demonstrate some technical results, which are necessary to prove Theorem \ref{th1}. We start with a number of conditions that ensure negligibility of certain partial sums. The result below is a direct multivariate extension of \cite[Lemma 6.6]{DJT13}.

\begin{lem} \label{lemneglig}
Let $\eta_i^n$ be  $\R^{d\times d}$-valued $\f_{i\De_n}$-measurable random variables. Then each of the following conditions implies 
the uniform convergence $\sum_{i=1}^{\lf t/\De_n \rf} \eta_i^n \ucp 0$:

\begin{align}
\label{cond1} &\sum_{i=1}^{\lf t/\De_n \rf}\E[\|\eta_i^n\|_{\text{\rm op}} \wedge 1] \to 0 , \\[1.5 ex]
\label{cond2} & \sum_{i=1}^{\lf t/\De_n \rf} \E[\eta_i^n|~\f_{(i-1)\De_n}] \ucp 0 \quad \text{and} \quad  \sum_{i=1}^{\lf t/\De_n \rf}\E[\|\eta_i^n\|_{\text{\rm op}}^2|~\f_{(i-1)\De_n}] 
\toop 0 , \\[1.5 ex]
\label{cond3} & \left.
\begin{array}{l}
\sum_{i=1}^{\lf t/\De_n \rf} \E[\eta_i^n \1_{\{ \|\eta_i^n\|_{\text{\rm op}}\leq 1  \}}|~\f_{(i-1)\De_n}] \ucp 0, \quad 
\sum_{i=1}^{\lf t/\De_n \rf} \P (\|\eta_i^n\|_{\text{\rm op}}>1 )\to 0, \\[4.5 ex]
\sum_{i=1}^{\lf t/\De_n \rf} \E[\|\eta_i^n\|_{\text{\rm op}}^2 \1_{\{ \|\eta_i^n\|_{\text{\rm op}}\leq 1  \}}] \to 0.
\end{array}
\right\}
\end{align}
The operator norm can be replaced by any other norm on $\R^{d\times d}$. 
\end{lem}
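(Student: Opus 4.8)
The plan is to reduce the matrix-valued statement to the univariate one, namely \cite[Lemma 6.6]{DJT13}, by an entrywise argument, and then to treat the three conditions separately. Since all norms on $\R^{d\times d}$ are equivalent and there are only finitely many entries, the uniform convergence $\sum_{i=1}^{\lf t/\De_n\rf}\eta_i^n \ucp 0$ holds if and only if $\sum_{i=1}^{\lf t/\De_n\rf}(\eta_i^n)_{jk}\ucp 0$ for every pair of indices $(j,k)$; this reduction also yields at once the final sentence of the lemma, since any two norms on $\R^{d\times d}$ differ only by multiplicative constants. Writing $a_i^n:=(\eta_i^n)_{jk}$, which is a real-valued $\f_{i\De_n}$-measurable variable, I would use the elementary bound $|a_i^n|=|e_j^\top \eta_i^n e_k|\le \|\eta_i^n\|_{\text{\rm op}}$ to transport each matrix hypothesis to its scalar counterpart.

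For condition \eqref{cond1} this transport is immediate: from $|a_i^n|\wedge 1\le \|\eta_i^n\|_{\text{\rm op}}\wedge 1$ one obtains $\sum_i \E[|a_i^n|\wedge 1]\to 0$, which is exactly the scalar version of \eqref{cond1}, so the univariate lemma applies. Equivalently, one argues directly: the event that some $\|\eta_i^n\|_{\text{\rm op}}>1$ has probability at most $\sum_i\P(\|\eta_i^n\|_{\text{\rm op}}>1)\le\sum_i\E[\|\eta_i^n\|_{\text{\rm op}}\wedge1]\to0$, and on its complement Markov's inequality controls $\sum_i\|\eta_i^n\|_{\text{\rm op}}$. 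For condition \eqref{cond2} the two scalar hypotheses follow from $\sum_i\E[a_i^n|\f_{(i-1)\De_n}]$ being an entry of the u.c.p.-null matrix sum and from $\E[(a_i^n)^2|\f_{(i-1)\De_n}]\le \E[\|\eta_i^n\|_{\text{\rm op}}^2|\f_{(i-1)\De_n}]$; again the univariate lemma concludes. The mechanism underlying the scalar case, which I would recall rather than reprove, is the martingale decomposition $\sum_i a_i^n=\sum_i\E[a_i^n|\f_{(i-1)\De_n}]+N^n$, where $N^n$ is a discrete-time martingale whose predictable quadratic variation is dominated by $\sum_i\E[(a_i^n)^2|\f_{(i-1)\De_n}]\toop0$; Lenglart's inequality then forces $\sup_{t\le T}|N^n_t|\toop0$.

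The only point requiring genuine care is condition \eqref{cond3}, and this is where a naive entrywise reduction breaks down: the truncation there is performed by the operator norm $\|\eta_i^n\|_{\text{\rm op}}$ of the whole matrix rather than entrywise, so $\eta_i^n\1_{\{\|\eta_i^n\|_{\text{\rm op}}\le1\}}$ does not match $a_i^n\1_{\{|a_i^n|\le1\}}$. I would therefore truncate at the matrix level, setting $\bar\eta_i^n:=\eta_i^n\1_{\{\|\eta_i^n\|_{\text{\rm op}}\le1\}}$. The second hypothesis of \eqref{cond3} guarantees that $\P(\exists\,i\le\lf T/\De_n\rf:\ \eta_i^n\ne\bar\eta_i^n)\le\sum_i\P(\|\eta_i^n\|_{\text{\rm op}}>1)\to0$, so $\sum_i\eta_i^n$ and $\sum_i\bar\eta_i^n$ coincide on an event of probability tending to one and it suffices to treat $\bar\eta_i^n$. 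But the truncated array is bounded by $1$ in operator norm and satisfies the hypotheses of condition \eqref{cond2}: the first display of \eqref{cond3} is exactly $\sum_i\E[\bar\eta_i^n|\f_{(i-1)\De_n}]\ucp0$, while the third display gives $\sum_i\E[\|\bar\eta_i^n\|_{\text{\rm op}}^2]\to0$, which is convergence in $L^1$ of the nonnegative sum $\sum_i\E[\|\bar\eta_i^n\|_{\text{\rm op}}^2|\f_{(i-1)\De_n}]$ and hence forces the latter to vanish in probability. Applying the already established case \eqref{cond2} to $\bar\eta_i^n$ then yields $\sum_i\bar\eta_i^n\ucp0$, completing the argument. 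The main obstacle is thus not analytic depth but bookkeeping: matching the matrix-level truncation of \eqref{cond3} to the scalar machinery, together with the routine verification that Lenglart's inequality transfers to the finitely-many-component setting.
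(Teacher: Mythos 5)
Your proposal is correct and matches the paper's approach: the paper states this lemma \emph{without proof}, describing it as a ``direct multivariate extension'' of \cite[Lemma 6.6]{DJT13}, and your entrywise reduction to that univariate lemma is precisely the argument the paper is implicitly invoking. The one place where the reduction is not purely mechanical --- condition \eqref{cond3}, whose truncation by $\|\eta_i^n\|_{\text{\rm op}}$ does not commute with passing to entries --- you handle correctly by truncating at the matrix level and reducing to condition \eqref{cond2}.
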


\noindent
In the next lemma we demonstrate some inequalities for the moments of  $M^n$ and related processes. They both follow
from \cite[Proposition 2.1.10]{JP12}. 

\begin{lem} \label{momentineq}
Let $W$ be a predictable $\R^d$-valued process and $u>0$ fixed. Then it holds that 
\begin{align} \label{mnineq1}
&\E\left[\sup_{s\leq t} \left \|\int_{u}^{u+s} W_s \odot dM^n_s \right\|^p_{\text{\rm tr}} \right] \leq C  v_n^{p-\be} \E\left[ \int_{u}^{u+t} \|W_s\|^p ds\right], 
\\[1.5 ex]
\label{mnineq2} &\E\left[ \sup_{s\leq t} \left \| M^n_{u+s} - M^n_{u} \right\|^p \right] \leq C  t v_n^{p-\be}, 
\end{align}
for  $1\leq \be <p\leq 2$ when $\be\geq 1$ and $\be<p\leq 1$ if $\be<1$. 
\end{lem}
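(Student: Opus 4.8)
The plan is to recognise both estimates as direct instances of the general moment bounds for Poisson stochastic integrals recorded in \cite[Proposition 2.1.10]{JP12}, and to reduce everything to a single explicit computation of the truncated $p$-th moment of the L\'evy measure. Recall that after the truncation of $G$ to $\{\|x\|\le 1\}$ the process $M^n=M(v_n)$ is the small-jump part of $L$, i.e.\ a pure jump L\'evy process associated with a Poisson random measure whose compensator is $ds\otimes G_n(dx)$ with $G_n(dx):=G(dx)\1_{\{\|x\|\le v_n\}}$. Since $G$ is symmetric, the compensating drift vanishes, so $M^n$ is always a centred process that can be written as a compensated (resp.\ absolutely convergent) sum of its own jumps; it is of finite variation exactly when $\be<1$.

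The single computation on which both bounds rest is, in the polar coordinates of \eqref{repG},
\[
\int_{\{\|x\|\le v_n\}}\|x\|^p\,G(dx)=H(\S_d)\int_0^{v_n}\rho^{\,p-1-\be}\,d\rho=\frac{H(\S_d)}{p-\be}\,v_n^{\,p-\be},
\]
where the integral is finite precisely because $p>\be$. This is the sole origin both of the rate $v_n^{p-\be}$ and of the standing constraint on $p$, and it is also where the exponent $2$ in $\|x\|^2\wedge 1$ (finiteness of the variance of small jumps, using $\be<2$) is implicitly used.

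Granting this, I would treat \eqref{mnineq2} first, taking the integrand $\delta(s,x)=x$. The two admissible regimes are handled by the same target bound but through different mechanisms, which is exactly why the case split on $\be$ appears. When $\be<1$ and $p\le 1$, $M^n$ has finite variation and the triangle inequality together with subadditivity of $t\mapsto t^p$ gives $\sup_{s\le t}\|M^n_{u+s}-M^n_u\|^p\le\sum_{u<r\le u+t}\|\De M^n_r\|^p$, whose expectation equals $t\int\|x\|^p G_n(dx)=Ctv_n^{p-\be}$. When $\be\ge 1$ and $p\in(1,2]$, $M^n$ is a martingale and I would invoke the Burkholder--Davis--Gundy inequality, then use subadditivity of $t\mapsto t^{p/2}$ (licit since $p\le 2$) to bound the quadratic variation $[M^n]_{u+t}-[M^n]_u=\sum\|\De M^n_r\|^2$ to the power $p/2$ by $\sum\|\De M^n_r\|^p$; taking expectations again produces $Ctv_n^{p-\be}$. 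Both steps are precisely the content of \cite[Proposition 2.1.10]{JP12}, and in each case the shift $u$ is irrelevant by time-homogeneity of $M^n$.

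For \eqref{mnineq1} I would repeat the argument with $\delta(s,x)=W_{s-}\odot x$. The tensor factorises cleanly: as $W_{s-}$ is a column vector one has $\|W_{s-}\odot x\|_{\text{\rm tr}}\le 2\|W_{s-}\|\,\|x\|$, hence $\|\delta(s,x)\|^p_{\text{\rm tr}}\le C\|W_{s-}\|^p\|x\|^p$. Inserting this into the same estimate and using predictability of $W$ to separate the $x$-integral yields
\[
\E\Big[\sup_{s\le t}\Big\|\int_u^{u+s}W_{r-}\odot dM^n_r\Big\|^p_{\text{\rm tr}}\Big]\le C\Big(\int_{\{\|x\|\le v_n\}}\|x\|^p G(dx)\Big)\,\E\Big[\int_u^{u+t}\|W_s\|^p\,ds\Big]=Cv_n^{\,p-\be}\,\E\Big[\int_u^{u+t}\|W_s\|^p\,ds\Big].
\]
The only genuinely delicate point, and the sole place where $p\le 2$ is essential, is the collapse of the BDG quadratic-variation bound in the martingale regime $\be\ge 1$: one must use subadditivity of $t\mapsto t^{p/2}$ to reduce $\big(\int\|x\|^2 G_n\big)^{p/2}$ to $\int\|x\|^p G_n$, thereby avoiding a spurious term of the form $t^{p/2}v_n^{(2-\be)p/2}$ that carries the wrong power of $t$ as $t\to 0$ and would destroy the clean factor $t$ in the stated bound. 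Everything else is routine bookkeeping, and the multivariate structure enters only through the elementary tensor inequality $\|W\odot x\|_{\text{\rm tr}}\le 2\|W\|\,\|x\|$.
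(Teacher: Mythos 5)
Your proposal is correct and takes essentially the same route as the paper: the paper's entire proof of this lemma is the one-line appeal to \cite[Proposition 2.1.10]{JP12}, which you invoke and then unpack (the truncated moment computation $\int_{\{\|x\|\le v_n\}}\|x\|^p\,G(dx)\le C v_n^{p-\be}$, the finite-variation/subadditivity argument for $\be<1$, and BDG plus subadditivity of $t\mapsto t^{p/2}$ over the jumps for $\be\ge 1$). Your closing remark about avoiding the Jensen-type bound, which would yield the wrong factor $t^{p/2}$ instead of $t$, is exactly the right point of care and is consistent with how the cited proposition is proved.
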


\noindent
Below we state a number of inequalities related to the compound Poisson part of the L\'evy process $L$. 
They directly follow from the univariate inequalities of \cite[Lemma 6.2 and 6.3]{DJT13}, since the objects 
$\tau(n,i)$ and $\|\De L_{T(n,i)_k} \|$ are one-dimensional.

\begin{lem} \label{Poissonest}
Recall the definition of random variables $\tau(n,i)$ and $T(n,i)_j$ from the previous subsection and let $w>0$, 
$b:= H(\S_d)/\beta$. 
\begin{itemize}
\item[(i)] For any $1\leq j \leq m$, it holds on the set $\{\tau(n,i) \geq j-1 \}$ that
\bee \label{tauineq}
\P\left(\tau(n,i) \geq m \right) \leq C \left(\De_n / v_n^{\be}\right)^{m-j+1}.  
\eee
\item[(ii)] For any $1\leq j \leq m$, it holds on the set $\{\tau(n,i) \geq j-1 \}$ that
\bee \label{Zineq2}
\E\left[(\|\De L_{T(n,i)_k} \| \wedge w )^p \1_{\{ \tau(n,i)\geq m\}} \right] \leq C
\begin{cases}
\De_n \left(\frac{b\De_n}{v_n^{\be}}\right)^{m-j} w^{p-\be} & \text{for } p>\be \\
\De_n \left(\frac{b\De_n}{v_n^{\be}}\right)^{m-j} \log(1/\De_n) & \text{for } p=\be \\
\De_n \left(\frac{b\De_n}{v_n^{\be}}\right)^{m-j} v_n^{p-\be} & \text{for } p<\be.
\end{cases}
\eee
\item[(iii)] For any $1\leq j \leq k< r \leq m$, it holds on the set $\{\tau(n,i) \geq j-1 \}$ that
\begin{align} \label{Zineq3}
&\E\left[\left(\|\De L_{T(n,i)_j} \| \|\De L_{T(n,i)_k} \|  \wedge w \right)^p \1_{\{ \tau(n,i)\geq m\}} \right] \\[1.5 ex]
&\leq C
\begin{cases}
\De_n^2 \log(1/\De_n) \left(\frac{b\De_n}{v_n^{\be}}\right)^{m-j-1} w^{p-\be} & \text{for } p>\be \\[1.5 ex]
\De_n^2( \log(1/\De_n))^2 \left(\frac{b\De_n}{v_n^{\be}}\right)^{m-j-1} & \text{for } p=\be. 
\end{cases}
\end{align}
\end{itemize}
\end{lem}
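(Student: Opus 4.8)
The plan is to reduce all three inequalities to their scalar counterparts in \cite[Lemma 6.2 and 6.3]{DJT13} by exploiting the radial structure of the large-jump process $A^n$. Recall from the construction that $A^n$ is compound Poisson with intensity $\overline{G}(v_n)$ and jump law $G(dx)\1_{\{v_n<\|x\|\leq 1\}}/\overline{G}(v_n)$; consequently $\tau(n,i)$ is Poisson with mean $\De_n\overline{G}(v_n)$ and, conditionally on $\tau(n,i)$, the successive jumps $\De L_{T(n,i)_k}$ are i.i.d.\ with that law. Using the polar factorisation \eqref{repG} of $G$, the norm $\|\De L_{T(n,i)_k}\|$ and the direction $\De L_{T(n,i)_k}/\|\De L_{T(n,i)_k}\|$ are independent, the former having law proportional to $\rho^{-1-\be}\1_{(v_n,1]}(\rho)\,d\rho$ and the latter having law $H/H(\S_d)$.

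The second step is the elementary computation $\overline{G}(v_n)=H(\S_d)\int_{v_n}^1\rho^{-1-\be}\,d\rho=b\,(v_n^{-\be}-1)$ with $b=H(\S_d)/\be$, so that $\De_n\overline{G}(v_n)\sim b\De_n v_n^{-\be}$; this both identifies the constant $b$ and accounts for the factors $(b\De_n/v_n^{\be})$ appearing on the right-hand sides. The decisive point is that neither the count distribution nor the radial jump-norm distribution depends on the dimension $d$ or on the angular shape of $H$: both are governed solely by $\be$ and by the total mass $H(\S_d)$.

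Finally, I would observe that the left-hand sides of \eqref{tauineq}, \eqref{Zineq2} and \eqref{Zineq3} are measurable functionals of the pair $\big(\tau(n,i),(\|\De L_{T(n,i)_k}\|)_{k\geq 1}\big)$ alone---the directions never enter, only the jump norms and their number do. Hence the joint law of this pair coincides, once $H(\S_d)$ has been absorbed into $b$, with that of the analogous scalar objects attached to a univariate symmetric $\be$-stable motion, and the three bounds follow verbatim from \cite{DJT13}; the conditioning ``on $\{\tau(n,i)\geq j-1\}$'' is read, as there, as a recursive estimate controlling the additional jumps needed to reach $\tau(n,i)\geq m$ together with the relevant moments of one or two of their norms. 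There is no genuine analytic obstacle here: once the polar factorisation is made explicit the reduction is purely distributional, and the only point requiring care is to state precisely that the jump norm is a function of the radial coordinate alone, so that the scalar Poisson moment computations of \cite{DJT13} may be quoted unchanged.
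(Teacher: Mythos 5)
Your proposal is correct and follows essentially the same route as the paper, which disposes of this lemma in a single remark: the bounds ``directly follow from the univariate inequalities of \cite[Lemma 6.2 and 6.3]{DJT13}, since the objects $\tau(n,i)$ and $\|\De L_{T(n,i)_k}\|$ are one-dimensional.'' What you add---the polar factorisation of $G$, the independence of jump norm and direction, the computation $\overline{G}(v_n)=b(v_n^{-\be}-1)$ identifying the constant $b$, and the observation that the left-hand sides depend only on $\bigl(\tau(n,i),(\|\De L_{T(n,i)_k}\|)_{k\geq 1}\bigr)$---is exactly the detail the paper leaves implicit, so the two arguments coincide.
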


\subsection{Proof of Theorem \ref{th1} in the case $\be \in (1,2)$}

\subsubsection{Negligible terms}

We have $\E[\xi_i^n(4)|~\f_{(i-1)\De_n}]=0$.
Using \eqref{mnineq1} and \eqref{mnineq2}  for $p=2$ we get
\[
\E[\|\xi_i^n(4)\|_{\text{tr}}^2] \leq C \De_n^2 \de_n^2 v_n^{4-2\be} = C\De_n (\log(1/\De_n))^{4-2(\be +1/\be)}.
\] 
Since $\be+1/\be>2$ for $\be>1$, we deduce  for $t>0$ that
$$ \sum_{i=1}^{\lf t/\De_n \rf} \E[\|\xi_i^n(4)\|_{\text{tr}}^2| \f_{(i-1)\De_n}] \toop 0\,.$$  
Thus,  by condition \eqref{cond2} we obtain that
\[
\sum_{i=1}^{\lf t/\De_n \rf} \xi_i^n(4) \ucp 0.
\]
For $j=2,3,5$ we have 
\[
\E[\|\xi_i^n(j)\|_{\text{op}} \wedge 1] \leq \P(\tau(n,i) \geq 2) \leq C\left(\De_n / v_n^{\be}\right)^{2} = C \De_n (\log(1/\De_n))^{-2\be}
\]
by  \eqref{tauineq} applied to $m=2$. Hence, we conclude that $\sum_{i=1}^{\lf t/\De_n \rf} \E[\|\xi_i^n(j)\|_{\text{op}} \wedge 1] \to 0$ and we deduce
by \eqref{cond1} that
\[
\sum_{i=1}^{\lf t/\De_n \rf} \xi_i^n(j) \ucp 0 \qquad \text{for } j=2,3,5.
\]  
Putting things together we have shown that $\sum_{i=1}^{\lf t/\De_n \rf} \xi_i^n(j) \ucp 0$ for $j=2,3,4,5$.

\subsubsection{The dominating term}

In this subsection we treat the term $\xi_i^n(1)$, which constitutes the dominating part in the case $\beta>1$. We will show the functional weak convergence 
$\sum_{i=1}^{\lf t/\De_n \rf} \xi_i^n(1) \schw U_t$, and hence \eqref{Uconv}, by analysing the characteristic function. Set $\im=\sqrt{-1}$. Defining 
\[
\varphi_t^n (u) = \E\left[\exp\left(\im \left \lan u,  \sum_{i=1}^{\lf t/\De_n \rf} \xi_i^n(1) \right \ran_{\text{tr}} \right)\right] \quad \text{and}
\quad \varphi_t (u)=  \E\left[\exp\left(\im \left \lan u,  U_t \right \ran_{\text{tr}} \right)\right], 
\] 
it suffices to prove that $\varphi^n (u) \ucp \varphi (u) $ for any  $u \in \R^{d\times d}$ (see \cite[Corollary VII.4.43]{JS03}; here there is no randomness involved and $\ucp$ just stands for uniform convergence in time  on compact intervals). We note
 that $(\xi_i^n(1))_{1 \leq i \leq \lf t/\De_n \rf}$ is a sequence of i.i.d.~random variables and introduce the normalised characteristic function of 
 $\xi_1^n(1)$:
\bee \label{Rn}
R^n (u) =\E\left[\exp(\im \lan u,  \xi_1^n(1) \ran_{\text{tr}})\right] -1, \qquad u \in \R^{d \times d}. 
\eee
We will use the following well known statement from analysis (see for example \cite[Lemma 6.7]{DJT13}). Let $a_i^n$ be  complex numbers. Then it holds 
\begin{align} \label{lemhelp}
\sum_{i=1}^{\lf t/\De_n \rf} a_i^n \ucp g(t) \quad \text{and} \quad g \text{ is continuous } 
 \Longrightarrow \prod _{i=1}^{\lf t/\De_n \rf} (1+ a_i^n ) \ucp \exp(g(t)). 
\end{align}
Applying the result of \eqref{lemhelp} to the setting $a_i^n=R^n (u)$, it suffices to show that 
\[
\De_n^{-1} R^n (u) \to \log \E\left[\exp(\im \lan u,  U_1\ran_{\text{tr}})\right]  \qquad \text{as } \De_n \to 0 
\]
to conclude $\varphi^n (u) \ucp \varphi (u) $ for each $u \in \R^{d\times d}$ and hence the convergence \eqref{Uconv}.
To compute the quantity $R^n (u)$ we recall that $M^n$ has the L\'evy triplet $(0,0,G(dx) \1_{\{\|x\| \leq v_n \}})$ and $A^n$ is a compound Poisson
process with intensity $\overline{G}(v_n):= G(\{x\in \R^d:~v_n<\|x\| \leq 1\})$ and jump distribution $G(dx) \1_{\{v_n<\|x\| \leq 1 \}}/ \overline{G}(v_n)$, and  $M^n$ and $A^n$ are independent. We also observe the identity 
$\lan u,  x y^{\top} \ran_{\text{tr}}= \lan x,  u y \ran = \lan y,  u^{\top} x \ran$ for any $x,y \in \R^d$ and $u\in \R^{d\times d}$. Hence, recalling the definition
of $\xi_i^n(1)$ we obtain the formula  
\bee
\E\big[ \exp(\im  \lan u , \zeta_1^n(1)\ran_{\text{tr}}) \big] = \E\big[ \exp(\im  \lan  \De_1^n M^n , \de_n(u+u^{\top}) \De L_{T(n,1)_1} \ran \1_{\{\tau(n,1)=1\}} )\big].
\eee
By conditioning we thus deduce that 
\begin{align*}
R^n(u) &= \alpha_n \De_n \int_{v_n<\|y\|\leq 1} \{\exp(z_n(u,y)) -1\} G(dy), \\
z_n(u,y) &=  \De_n \int_{\|x\| \leq v_n} \{\exp(\im \lan x, \de_n(u+u^{\top}) y \ran )- 1- \im \lan x, \de_n(u+u^{\top}) y \ran\} G(dx),
\end{align*} 
with $\alpha_n:=\exp(-\De_n \overline{G}(v_n)) \to 1$ since $\overline{G}(v_n) \leq C \De_n^{-1/2} (\log(1/\De_n))^{-\be}$. 
We now decompose $R^n(u)=\rho_n(u) + \rho'_n(u)$ with 
\begin{align*}
\rho_n(u) &= \al_n \De_n^2 \int_{v_n<\|y\|\leq 1} \left(\int_{\|x\| \leq v_n} \{\exp(\im \lan u, \de_n x\odot y  \ran_{\text{tr}} )- 1- \im \lan u, \de_n
x\odot y  \ran_{\text{tr}} G(dx) \right) G(dy), \\
 \rho'_n(u)& = \alpha_n \De_n \int_{v_n<\|y\|\leq 1} \{\exp(z_n(u,y)) -1 - z_n(u,y)\} G(dy).
\end{align*}
We observe that for any $w>0$ it holds that
\[
\int (w\|x\|) \wedge (w\|x\|)^2 G(dx)  = C\left(w \int_{w^{-1}}^1 r^{-\be} dr + w^2 \int_{0}^{w^{-1}} r^{1-\be} dr \right)  \le C w^{\beta}. 
\]
In conjunction with the inequality $|\exp(\im w)-1-\im w| \le C (|w| \wedge w^2 )$, we deduce, for a fixed $u \in \R^{d \times d}$, that 
\bee
|z_n(u,y)| \leq C\De_n \int (\de_n\|y\| \|x\|) \wedge (\de_n\|y\| \|x\|)^2 G(dx) \leq C \De_n \de_n^{\be} \|y\|^{\be} 
=C  \|y\|^{\be} / \log(1/\De_n).
\eee
Consequently, we obtain for a fixed $u \in \R^{d \times d}$
\bee
|\rho'_n(u)| \leq C \al_n \De_n \int_{v_n<\|y\|\leq 1} |z_n(u,y)|^2 G(dy) \leq C \al_n \De_n (\log(1/\De_n))^{-1} = o(\De_n).
\eee
This proves the approximation
\begin{align*}
\De_n^{-1} R^n(u) =  \De_n^{-1} \rho_n(u) + o(1)=  \alpha_n \int \{\exp(\im \lan u, z \ran_{\text{tr}}) - 1 - \im \lan u, z \ran_{\text{tr}}\}\nu_n (dz) +o(1),
\end{align*}
where $\nu_n$ is a L\'evy measure on $\R^{d\times d}$ defined by 
\bee \label{Fn}
\nu_n(A) =   \De_n \int_{v_n<\|y\|\leq 1} \left(\int_{\|x\| \leq v_n} \1_A(\de_n x\odot y) G(dx) \right) G(dy).
\eee
Note that $\nu_n$ is a L\'evy measure that satisfies $\int(\|x\|_{\text{tr}} \wedge \|x\|_{\text{tr}}^2) \nu_n(dx)<\infty$. 
To analyse the convergence of the measure $\nu_n$ we observe the following identities: For $x, y \in \R^d$ it holds that 
$\text{tr}(x\odot y)=2\lan x,y \ran$ and $\|x\odot y\|_{\text{tr}}^2=2(\|x\|^2 \|y\|^2 + \lan x,y \ran^2)$. 

In the final step we will show that the L\'evy measure $\nu_n$ converges, which requires to prove the conditions of \cite[Theorem 8.7]{S99}. Recall that $\S_{d\times d}$ denotes the unit sphere on $\R^{d \times d}$ equipped with the Frobenius norm $\|\cdot\|_{\text{tr}}$. 
Since $\int_{\|z \|_{\text{tr}} \geq 1 } \lan u, z \ran_{\text{tr}} \nu_n (dz) =0$ due to the symmetry of the measure $G$,
it suffices to show the
following conditions:
\begin{align}
\label{cond1a} 
&\lim_{n\to \infty} \nu_n(B\times (w,\infty)) = \nu_{U}(B\times (w,\infty)) \qquad \text{for } B\in \mathcal{B}(\S_{d\times d}) , w>0, \\[1.5 ex] 
\label{cond2a} 
&  \lim_{\ep \to 0} \limsup_{n \to \infty}   \int_{z:~\|z\|_{\text{tr}}\leq \ep} \|z\|_{\text{tr}}^2 \nu_n(dz) = 0.
\end{align}    
First of all, we observe that the support of $\nu_U$ must be contained in $\R^d \odot \R^d$. We start by showing the condition \eqref{cond1a}. 
Recalling the definition of the L\'evy measure $G$, we obtain the identity 
\begin{align*}
\nu_n(B\times (w,\infty)) &= \De_n \int_{\S_d^2} \int_{(0,v_n) \times (v_n, 1)} \1_B\left(\frac{\te_1\odot \te_2}{\|\te_1\odot \te_2\|_{\text{tr}}} \right)
\1_{(w,\infty)}\left(\de_n \rho_1 \rho_2 \sqrt{2(1+\lan \te_1,\te_2 \ran^2)}\right) \\[1.5 ex]
& \quad \times (\rho_1 \rho_2)^{-1-\be}d\rho_1 d\rho_2 H(d\te_1) H(d\te_2).
\end{align*}  
We compute the integral with respect to $d\rho_1 d\rho_2$. Observe that $\de_n v_n^2 \to \infty$ and hence, for any $r>0$, it holds that
\begin{align} \label{intcomp}
&\De_n \int_{(0,v_n) \times (v_n, 1)} \1_{(w,\infty)}\left(\de_n \rho_1 \rho_2r \right) (\rho_1 \rho_2)^{-1-\be} d\rho_1 d\rho_2 \\[1.5 ex] 
&= \De_n \int_{(v_n, 1)} \left( \int_{w/(\de_n \rho_2 r)}^{v_n} \rho_1^{-1-\be} d\rho_1 \right) \rho_2^{-1-\be} d\rho_2 \nonumber \\[1.5 ex] 
&= \frac{\De_n \de_n^{\be} r^{\be}}{\be w^{\be}} \int_{(v_n, 1)} \rho_2^{-1} d\rho_2 +o(1) = \frac{ r^{\be}}{2\be^2 w^{\be}} +o(1),  
\end{align}
where we used that $2\be \log(1/v_n)/ \log(1/\De_n) \to 1$. Hence, we conclude the convergence in \eqref{cond1a} with 
\bee
\nu_U(A) = \frac{1}{2\be} \int_{\S_{d\times d}} \mu(dz) \int_0^{\infty} \1_A(\rho z) \rho^{-1-\be} d\rho, \qquad A\in \mathcal{B}(\R^d \odot \R^d),
\eee 
and 
\bee
\mu(z) = \int_{\S_d^2} \1_z\left(\frac{\te_1\odot \te_2}{\|\te_1\odot \te_2\|_{\text{tr}}} \right) \left(2(1+\lan \te_1,\te_2 \ran^2)\right)^{\be/2} H(d\te_1) H(d\te_2), \qquad z\in \mathcal{B}(\S_{d\times d}).
\eee
Now, we turn our attention to condition \eqref{cond2a}. For $\ep>0$ we have that 
{\small\begin{align*}
\int_{z:~\|z\|_{\text{tr}}\leq \ep} \|z\|_{\text{tr}}^2 \nu_n(dz) &= \De_n \int_{\S_d^2} \int_{(0,v_n) \times (v_n, 1)} \1_{\S_{d\times d}}
\left(\frac{\te_1\odot \te_2}{\|\te_1\odot \te_2\|_{\text{tr}}} \right)
\1_{(0,\ep)}\left(\de_n \rho_1 \rho_2 \sqrt{2(1+\lan \te_1,\te_2 \ran^2)}\right) \\[1.5 ex]
&\quad \times (\rho_1 \rho_2)^{-1-\be} \left(\de_n \rho_1 \rho_2 \sqrt{2(1+\lan \te_1,\te_2 \ran^2)}\right)^2 d\rho_1 d\rho_2 H(d\te_1) H(d\te_2).
\end{align*}} 
A similar computation as in \eqref{intcomp} gives for any $r>0$:
\begin{align} \label{intcomp1}
&\De_n \de_n^2 r^2 \int_{(0,v_n) \times (v_n, 1)} \1_{(0,\ep)}\left(\de_n \rho_1 \rho_2r \right) (\rho_1 \rho_2)^{1-\be} d\rho_1 d\rho_2 \\[1.5 ex] 
&= \De_n \de_n^2 r^2 \int_{(v_n, 1)} \left( \int_{0}^{\ep/(\de_n \rho_2 r)} \rho_1^{1-\be} d\rho_1 \right) \rho_2^{1-\be} d\rho_2 \nonumber \\[1.5 ex] 
&\to  \frac{ r^{\be} \ep^{2-\be}}{2\be(2-\be)}  \qquad \text{as } n\to \infty.  \nonumber 
\end{align}
Since $\be \in (0,2)$ we conclude that 
\begin{align}
\lim_{\ep \to 0} \limsup_{n \to \infty}   \int_{z:~\|z\|_{\text{tr}}\leq \ep} \|z\|_{\text{tr}}^2 \nu_n(dz) = 0,
\end{align}
which proves the statement \eqref{cond2a} and completes the proof of Theorem \ref{th1} in the case $\beta>1$.

\subsection{Proof of Theorem \ref{th1} in the case $\be \in (0,1]$}

\subsubsection{Negligible terms}
Recall that $v_n= (\De_n \log(1/\De_n))^{1/\be}=\de_n^{-1}$ when $\be\leq 1$. In this subsection we show that the terms $\sum_{i=1}^{\lf t/\De_n \rf} \xi_i^n(j)$ are negligible for $j=1,3,4,5$. We start with $j=4$. If $\be=1$, 
we observe that $\E[\xi_i^n(4)| \f_{(i-1)\De_n}]=0$ and 
\[
\E[\|\xi_i^n(4)\|_{\text{op}}^2] \leq C \De_n^2 \de_n^2 v_n^{2} = C\De_n^2,
\] 
where the above inequality follows from \eqref{mnineq1} and \eqref{mnineq2}.  Hence, $\sum_{i=1}^{\lf t/\De_n \rf} \xi_i^n(4) \ucp 0$ by \eqref{cond2} when $\be=1$. For 
$\be<1$ we use the inequalities  \eqref{mnineq1} and \eqref{mnineq2} with $p=1$ to obtain that
\[
\E[\|\xi_i^n(4)\|_{\text{op}}] \leq  C \De_n^2 \de_n v_n^{2(1-\be)} = o(\De_n).
\]
Thus, we conclude that $\sum_{i=1}^{\lf t/\De_n \rf} \xi_i^n(4) \ucp 0$ by \eqref{cond1} when $\be<1$.

Now, we consider the case $j=3$. Notice that $\| x\odot y\|_{\text{op}} \leq 2\|x\| \|y\|$ for all $x,y\in \R^d$. Hence, we obtain that 
\[
\|\xi_i^n(3)\|_{\text{op}} \leq C \de_n \|\De_i^n M^n \| \sum_{j\geq 1} \|\De L_{T(n,i)_j} \| \1_{\{\tau(n,i) \geq 2 \vee j \}}.
\] 
Observing that $M^n$ and $A^n$ are independent, and applying \eqref{Zineq2} for $w=1$ along with \eqref{mnineq2} (either for $p=1$ when $\be<1$
or for $p=2$ when $\be=1$), we deduce the
inequality
\bee
\E\left[\|\xi_i^n(3)\|_{\text{op}} \right] \leq C
\begin{cases}
\de_n\De_n^{5/2} v_n^{-1/2}  \log(1/\De_n)  & \text{if } \be=1 \\
\de_n\De_n^{3}  v_n^{1-2\be}  & \text{if } \be<1.
\end{cases}
\eee
In both cases we have that $\sum_{i=1}^{\lf t/\De_n \rf}  \E\left[\|\xi_i^n(3)\|_{\text{op}} \right] \to 0$ and hence 
$\sum_{i=1}^{\lf t/\De_n \rf} \xi_i^n(3) \ucp 0$.

Next, we consider the case $j=1$. We start with $\be<1$. As in the previous case we get 
\[
\|\xi_i^n(1)\|_{\text{op}} \leq C \de_n \|\De_i^n M^n \|  \|\De L_{T(n,i)_1} \| \1_{\{\tau(n,i) =1\}}.
\] 
By inequalities \eqref{mnineq2} and \eqref{Zineq2} applied for $w=1$ and $m=j=1$, we then deduce that 
\bee
\E[\|\xi_i^n(1)\|_{\text{op}} ] \leq C \de_n \De_n^2 v_n^{1-\be} = \De_n (\log(1/\De_n))^{-1}.  
\eee 
This immediately implies that $\sum_{i=1}^{\lf t/\De_n \rf} \xi_i^n(1) \ucp 0$. When $\be=1$ we first observe that $\E[\xi_i^n(1)|~\f_{(i-1)\De_n}]=0$ due to independence of $A_n$ and $M_n$. On the other hand, using again  \eqref{mnineq2} and \eqref{Zineq2}, we obtain that
\bee
\E[\|\xi_i^n(1)\|_{\text{op}}^2|~\f_{(i-1)\De_n}] \leq C \de_n^2 \E[\|\De_i^n M^n \|^2  \|\De L_{T(n,i)_1} \|^2 \1_{\{\tau(n,i) =1\}}] \leq 
C \de_n^2 \De_n^2 v_n.
\eee
Hence, by \eqref{cond2} we conclude that  $\sum_{i=1}^{\lf t/\De_n \rf} \xi_i^n(1) \ucp 0$.

Finally, let us treat the case $j=5$. We use the decomposition $\xi_i^n(1)= \eta_i^n(1,2) + \eta_i^n(1,3) + \eta_i^n(2,3) +\eta_i^{\prime n}$ with 
\begin{align*}
\eta_i^n(j,k) &= \de_n \De L_{T(n,i)_j} \odot \De L_{T(n,i)_k} \1_{\{\tau(n,i) =3\}}, \\
\eta_i^{\prime n} &= \de_n \sum_{r=2}^{\infty} \sum_{k=1}^{r-1} \De L_{T(n,i)_r} \odot \De L_{T(n,i)_k} \1_{\{\tau(n,i) \geq r \vee 4\}}.
\end{align*}
Recalling the inequality $(\sum_j a_j)^{\be} \leq \sum_j a_j^{\be}$ for positive real numbers $a_j$ and $\be\leq 1$, and applying  \eqref{Zineq3}
for $w=1$ we deduce that 
\begin{align*}
\E[\|\eta_i^{\prime n}\|_{\text{op}} \wedge 1 ] &\leq  \E[\|\eta_i^{\prime n}\|_{\text{op}} ^{\be} ] \leq C \de_n^{\be} 
\sum_{r=2}^{\infty} \sum_{k=1}^{r-1} \E\left[ \|\De L_{T(n,i)_r}\|^{\be} \|\De L_{T(n,i)_k}\|^{\be} \1_{\{\tau(n,i) \geq r \vee 4\}} \right] \\
& \leq C \de_n^{\be} \De_n^2 (\log(1/\De_n))^2 \sum_{r=2}^{\infty} \sum_{k=1}^{r-1} \left(\frac{b\De_n}{v_n^{\be}} \right)^{r \vee 4 -2}
 \le C \de_n^{\be} \De_n^4  (\log(1/\De_n))^2 / v_n^{2\be} \\
&= C \De_n/ \log (1/\De_n) .
\end{align*} 
Hence, via \eqref{cond1} we conclude that $\sum_{i=1}^{\lf t/\De_n \rf} \eta_i^{\prime n} \ucp 0$. When $\be<1$ we deduce from \eqref{Zineq3}
applied to $w=1/\de_n$ that 
\[
\E[\|\eta_i^n(j,k)\|_{\text{op}} \wedge 1 ] \leq C \de_n^{\beta} \De_n^3 \log(1/\De_n)/v_n^{\be} = C \De_n / \log (1/\De_n), 
\qquad 1\leq j<k\leq 3,
\]
and thus we again conclude that $\sum_{i=1}^{\lf t/\De_n \rf} \eta_i^n(j,k) \ucp 0$. In the setting $\be=1$ we will show that condition 
\eqref{cond3} is satisfied. First, we observe that random variables $\eta_i^n(j,k)$ have symmetric distribution since $G$ is symmetric. 
Consequently, we deduce the identity 
\[
\E\left[\eta_i^n(j,k) \1_{\{ \|\eta_i^n(j,k)\|_{\text{\rm op}}\leq 1  \}}|~\f_{(i-1)\De_n}\right] =0.
\] 
On the other hand, using again \eqref{Zineq3}
for  $w=1/\de_n$  and $p=2$ we obtain that 
\[
\E\left[(\|\eta_i^n(j,k)\|_{\text{op}} \wedge 1)^2|~\f_{(i-1)\De_n}\right] \leq C \de_n \De_n^3 \log(1/\De_n)/v_n =C \De_n / \log (1/\De_n).
\]
Thus, all conditions of  \eqref{cond3} are satisfied and we conclude that $\sum_{i=1}^{\lf t/\De_n \rf} \eta_i^n(j,k) \ucp 0$ for $1\leq j<k\leq 3$
and $\be=1$.

\subsubsection{The dominating term}

In this subsection we prove that $\sum_{i=1}^{\lf t/\De_n \rf} \xi_i^n(2) \schw U_t$. 
We will apply \cite[Theorem VII.3.4]{JS03}, which holds for partial sums of independent random variables. The following  conditions are sufficient
to guarantee the functional weak convergence $\sum_{i=1}^{\lf t/\De_n \rf} \xi_i^n(2) \schw U_t$:
\begin{align} 
\label{cltcond1} &\sum_{i=1}^{\lf t/\De_n \rf} \E\left[ \xi_i^n(2) \1_{\{ \| \xi_i^n(2)\|_{\text{tr}} \leq 1 \}} \right] \ucp 0, \\
\label{cltcond2} & \lim_{\ep \to 0} \limsup_{n \to \infty} \sum_{i=1}^{\lf t/\De_n \rf} \E\left[ \| \xi_i^n(2)\|_{\text{tr}} ^2 \1_{\{ \| \xi_i^n(2)\|_{\text{tr}} \leq \ep \}}
\right]
=0 \qquad \text{for any } t>0, \\[1.5 ex]
\label{cltcond3} & \lim_{n\to \infty} \sum_{i=1}^{\lf t/\De_n \rf} \P(\xi_i^n(2) \in A) = t \nu_U(A),
\end{align}
where the last condition holds for all sets of the form $A=\{\rho B:~ B\in \mathcal B(\S_{d} \odot \S_d), ~\rho \in (w,\infty)\}$ with $w>0$. In our setting
conditions \eqref{cltcond1}-\eqref{cltcond3} simplify even further, because $\xi_i^n(2)$ are identically distributed.

Since the L\'evy measure $G$ is
symmetric, we immediately deduce that the expectation $\E[ \xi_i^n(2) \1_{\{ \| \xi_i^n(2)\|_{\text{tr}} \leq 1 \}} ] =0$ and hence  \eqref{cltcond1} holds. We proceed with the proof of condition  \eqref{cltcond2}. For $x, y \in \R^d$ it holds $\|x\odot y\|_{\text{tr}}^2=2(\|x\|^2 \|y\|^2 + \lan x,y \ran^2)$ from which we deduce that
\bee\label{eq:tool}
\sqrt{2} \|x\| \|y\| \le \| x\odot y \|_{\text{tr}}\le 2 \|x\| \|y\|\,.
\eee
Recalling the definition of the term $\xi_i^n(2)$ and 
using \eqref{eq:tool}, we get the inequality 
\begin{align*}
\E\left[ \| \xi_i^n(2)\|_{\text{tr}} ^2 \1_{\{ \| \xi_i^n(2)\|_{\text{tr}} \leq \ep \}}
\right] &\leq C \de_n^2 \E\left[ \| \De L_{T(n,i)_1}\| ^2 \| \De L_{T(n,i)_2}\| ^2 \right. \\
& \left. \times  \1_{\{\tau(n,i)=2,~ \de_n \| \De L_{T(n,i)_1}\|\| \De L_{T(n,i)_2}\| \leq \ep/\sqrt{2} \}}
\right] =: r_i^n.
\end{align*}
Noting that $\de_n v_n=1$, we conclude that
\begin{align*}
r_i^n &\leq C \De_n^2 \de_n^2 \int_{(v_n,1]^2} (\rho_1 \rho_2)^{1-\be}  \1_{\{ \de_n \rho_1 \rho_2 \leq \ep/\sqrt{2}  \}} d\rho_1 d\rho_2 \\
&= C \De_n^2 \de_n^2 \int_{v_n}^{1} \rho_1^{1-\be} \left( \int_{v_n}^{\ep/(\sqrt{2} \rho_1 \de_n)} \rho_2^{1-\be} d\rho_2 \right) d\rho_1 \\
& \leq \De_n^2 \de_n^{2} \left( C\frac{\ep^{2-\be} \de_n^{\be -2}}{\log \de_n}+C v_n^{2-\be} \right)
\leq C \De_n^2 \de_n^{\be}\left( \frac{ \ep^{2-\be} }{\log \de_n}+1\right).
\end{align*}
The latter implies
$\limsup_{n \to \infty} \De_n^{-1} r_i^n =0$, 
and consequently condition \eqref{cltcond2} holds.

Now, we show the condition \eqref{cltcond3} for the announced sets $A$. First of all, we deduce that 
\bee
\De_n^{-1} \P(\xi_i^n(2) \in A) = \frac{\De_n K}{2} \int_{(v_n,1]^2} (\rho_1 \rho_2)^{-1-\be}  \1_{(w,\infty)}(\de_n \rho_1 \rho_2) d\rho_1 d\rho_2
+ o(1), 
\eee
where $K=\int_{\S_d^2} \1_{B} (x\odot y) H(dx) H(dy)$.   Assume for the moment that $w\in (0,1)$. In this case we have that $w/(\de_n \rho_1)<1$ for any $\rho_1 \geq v_n$. Hence, we get that 
\begin{align*}
&\frac{\De_n K}{2} \int_{(v_n,1]^2} (\rho_1 \rho_2)^{-1-\be}  \1_{(w,\infty)}(\de_n \rho_1 \rho_2) d\rho_1 d\rho_2 \\
&= \frac{\De_n K}{2} \left( \int_{v_n}^{w} \rho_1^{-1-\be} \left( \int_{w/(\de_n \rho_1)}^1 \rho_1^{-1-\be} d\rho_2 \right) d\rho_1 +
\frac{\de_n^{\be}}{\be} \int_w^1  \rho_1^{-1-\be} d \rho_1
\right) \\
& = \frac{K \De_n \de_n^{\be} \log(\de_n)}{2\be w^{\be}} + o(1) = \frac{K}{2\be^2 w^{\be}} + o(1).
\end{align*}
Consequently, we obtain the convergence in  \eqref{cltcond3} for $w\in (0,1)$. For $w\geq 1$ we have that $w/(\de_n \rho_1)\le 1$ for any 
$\rho_1 \geq w v_n$. Thus, we deduce that 
\begin{align*}
&\frac{\De_n K}{2} \int_{(v_n,1]^2} (\rho_1 \rho_2)^{-1-\be}  \1_{(w,\infty)}(\de_n \rho_1 \rho_2) d\rho_1 d\rho_2 \\
&= \frac{\De_n K}{2} \int_{wv_n}^{1} \rho_1^{-1-\be} \left( \int_{w/(\de_n \rho_1)}^1 \rho_1^{-1-\be} d\rho_2 \right) d\rho_1 
 \\
&= \frac{\De_n K}{2 \be} \int_{wv_n}^{1} \rho_1^{-1-\be}  \left((w/(\de_n \rho_1))^{-\be} -1 \right) d\rho_1  \\
& = \frac{K \De_n \de_n^{\be} \log(\de_n)}{2\be w^{\be}} + o(1) = \frac{K}{2\be^2 w^{\be}} + o(1).
\end{align*}
This implies that the convergence in  \eqref{cltcond3} also holds for $w\geq 1$, which completes the proof.

\bibliographystyle{chicago}
 
\end{document}